\newcommand{\op}[1]{\operatorname{#1 }}
\newcommand{\R}{\mathbb R}
\newcommand{\N}{\mathbb N}
\newcommand{\Z}{\mathbb Z}
\renewcommand{\P}[2]{\mathbb{P}_{#2}\left( #1 \right)}
\def\build#1_#2^#3{\mathrel{
\mathop{\kern 0pt#1}\limits_{#2}^{#3}}}
\newtheorem{theorem}{Theorem}
\newtheorem{proposition}[theorem]{Proposition}
\newtheorem{lemma}[theorem]{Lemma}
\renewcommand{\P}[1]{\mathbb{P}\left[#1\right]}
\theoremstyle{definition}
\newcommand\Es[1]{\mathbb{E}\left[#1\right]}
\renewcommand\Pr[1]{\mathbb{P}\left(#1\right)}
\newcommand\GWmu[1]{\GW_{ \mu}\left(#1\right)}
\newcommand\Esmu[1]{\GW_{ \mu}\left[#1\right]}
\def \N {\mathbb N}
\def \R {\mathbb R}
\def \P {\mathbb{P}}
\def \H {\mathsf{Height}}
\def \GW {\mathsf{GW}}
\def \t {\mathfrak{t}}
\def \ddiss {  \mathrm{d} _{ \mathcal{D}}}
\def\llbracket{[\hspace{-.10em} [ }
\def\rrbracket{ ] \hspace{-.10em}]}
\title{  \vspace {-2cm}\textbf{The  CRT is the scaling limit of random dissections}}
\date{}
\author{Nicolas Curien\thanks{Université Paris 6 and CNRS, E-mail: nicolas.curien@gmail.com},\quad  Bénédicte Haas\thanks{ Universit\'e Paris-Dauphine and \'Ecole normale supérieure, E-mail: haas@ceremade.dauphine.fr} \quad and \quad Igor Kortchemski\thanks{DMA, École Normale Supérieure, E-mail: igor.kortchemski@normalesup.org}}
\DeclareSymbolFont{extraup}{U}{zavm}{m}{n}
\DeclareMathSymbol{\varheart}{\mathalpha}{extraup}{86}
\DeclareMathSymbol{\vardiamond}{\mathalpha}{extraup}{87}
\renewcommand*{\@fnsymbol}[1]{\ensuremath{\ifcase#1\or  \vardiamond \or \clubsuit\or \spadesuit\or
   \mathsection\or \mathparagraph\or \|\or **\or \dagger\dagger
   \or \ddagger\ddagger \else\@ctrerr\fi}}
\begin{document}
\maketitle

\let\thefootnote\relax\footnotetext{ \\\emph{MSC2010 subject classiﬁcations}. Primary 60J80,05C80 ; secondary 05C05. \\
 \emph{Keywords and phrases.} Random dissections, Galton--Watson trees, scaling limits, Brownian Continuum Random Tree,  Gromov--Hausdorff topology}
 
\vspace {-0.5cm}

\begin{abstract} We study the graph structure of large random dissections of polygons sampled according to Boltzmann weights, which encompasses the case of uniform dissections or uniform $p$-angulations. As their number of vertices $n$ goes to infinity, we show that these random graphs, rescaled by $n^{-1/2}$,  converge in the Gromov--Hausdorff sense towards a multiple of  Aldous' Brownian  tree when the weights decrease sufficiently fast. The scaling constant depends on the Boltzmann weights in a rather amusing and intriguing way, and is computed by making use of a Markov chain which compares the length of geodesics in dissections with the length of geodesics in their dual trees. 
\end{abstract}

 \begin{figure}[!h]
 \begin{center}
  \includegraphics[width=0.95  \linewidth]{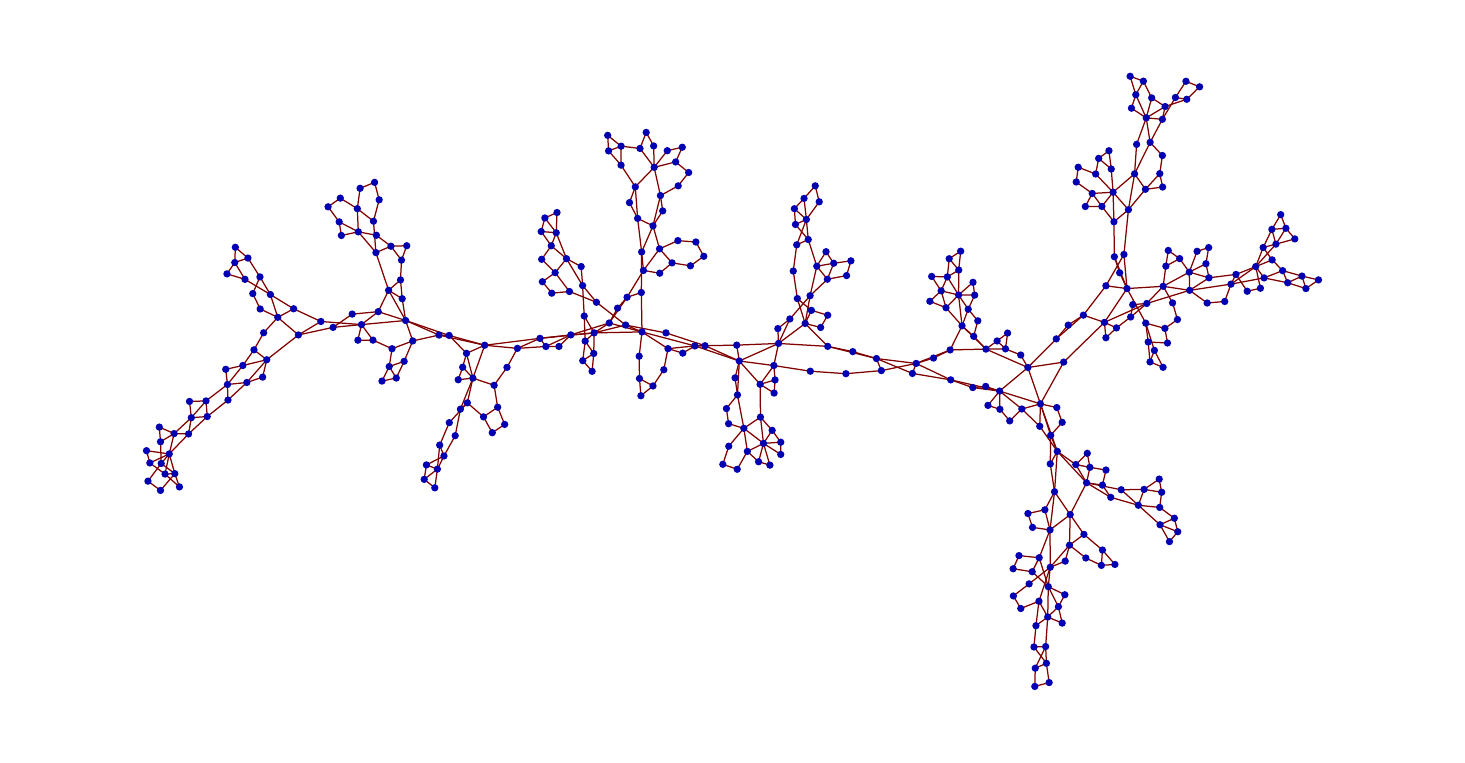}
 \caption{ \label{fig:largediss} A uniform dissection of a polygon with 387 vertices, embedded non isometrically in the plane.}
 \end{center}
 \end{figure}

 \vfill
 \pagebreak
\section{Introduction}
Let $ \mathcal{P}_{n}$ be the convex polygon inscribed in the unit disk $\overline{\mathbb{D}}$ of 
the complex plane whose vertices are the $n$-th roots of unity. A \emph{dissection} of $\mathcal{P}_{n}$  is by definition the union of the sides of $\mathcal{P}_{n}$ together with a collection of diagonals that may intersect only at their endpoints. A \emph{triangulation} (resp. a \emph{$p$-angulation} for $p \geq 3$) is a dissection
whose inner faces are all triangles (resp.\,$p$-gons). 

\begin{figure}[!h]
 \begin{center}
 \includegraphics[height=3cm]{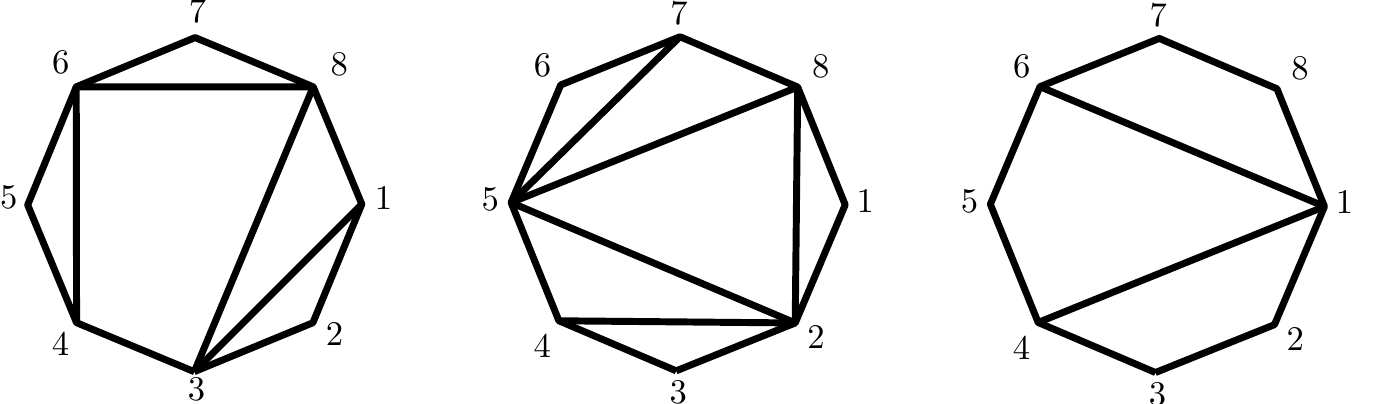}
 \caption{ \label{fig1} A dissection, a triangulation and a quadrangulation of the octogon.}
 \end{center}
 \end{figure}
 In \cite{Ald94b}, Aldous studied random uniform triangulations of $ \mathcal{P}_{n}$ seen as closed subsets of $ \overline{\mathbb{D}}$ 
 (see Fig.\,\ref{fig1}), and proved convergence, as $n \to \infty$, towards a random closed  subset   of $ \overline{\mathbb{D}}$ of Hausdorff dimension ${3}/{2}$ called the ``Brownian triangulation''. This approach has been pursued in \cite{CKdissections} in the case of uniform dissections, see also \cite{CLGrecursive,Kor11} for related models. In this work, instead of viewing dissections as subsets of the unit disk, we view them as compact metric spaces by equipping the vertices of the polygon with the graph distance (every edge has unit length).

Graph properties (such as  maximal vertex or face degrees, diameter, etc.) of large random dissections have attracted a lot of attention in the combinatorial literature. In particular, it has been noted that the combinatorial structure of dissections (and more generally of non-crossing configurations) is very close to that of plane trees  (see Fig. \ref{fig:largediss} for an illustration). 
For instance, the number of dissections of $\mathcal{P}_{n}$ exhibits the $n^{-3/2}$ polynomial correction \cite{FN99}, characteristic in the counting of trees. Also, various models of random dissections of $\mathcal{P}_{n}$  have maximal {vertex or face} degree{s} of order $ \log(n)$ \cite{BPS08,CKdissections,DFHN99,GW00} and diameter of order $ \sqrt{n}$ \cite{DMN12}, thus suggesting a ``tree-like'' structure.

In this work, we show that many different models of large random dissections, suitably rescaled, converge towards the Brownian Continuum Random Tree (CRT) introduced by Aldous in \cite{Ald91a}.
 The latter convergence holds in distribution with respect to the Gromov--Hausdorff topology which gives sense to convergence 
of compact metric spaces, see Section \ref{sec:GH} {for background}. 
 \bigskip

\noindent \textbf{Boltzmann dissections.} \quad We will work with the model of random Boltzmann dissections introduced in  \cite{Kor11}. Let $ \mu=(\mu_j)_{j \geq 0}$ be a probability distribution 
on the nonnegative integers $\mathbb Z_+=\{0,1, \ldots  \}$ such that $\mu_1=0$ and the mean of $  \mu$ is equal to $1$ ($\mu$ is said to be critical). For every integer $n\geq  3$ for which it makes sense, the  Boltzmann probability measure $\mathbb{P} ^ {\mu}_ {n}$ is the probability measure on the set of all dissections of $ \mathcal{P}_{n}$ defined by
$$ \mathbb{P} ^ {\mu}_ {n}(\omega)=Z_{n}^{-1} \prod_{f \textrm{ inner face of } \omega}
\mu_{\deg(f)-1}, $$ where $\deg(f)$ is the
degree of the face $f$, that is the number of edges in the boundary of $f$, and $Z_n$ is a normalizing constant. Note that the definition of $\mathbb{P} ^ { \mu}_ {n}$ only involves  $\mu_2, \mu_3, \ldots$, the initial weights $ \mu_0$ and  $\mu_{1}$ being here in order that $ \mu$ defines a {critical} probability measure. This will later be useful, see Proposition \ref{prop:GW}. We also point out that the hypothesis $\sum_{i \geq 2} i \mu_i=1$ is not as restrictive as it may first  appear, this is discussed in the remark before Section 2.2. In the following, all the statements have to be implicitly restricted to the values of $n$ for which the definition of $ \mathbb{P}_{n}^\mu$ makes sense.

Throughout the paper, 
$  \mathcal{D}^\mu_{n} $ denotes
a random dissection of $ \mathcal{P}_{n}$ distributed according to $ \mathbb{P} ^ {\mu}_ {n}$, which is  endowed  with the graph distance. More generally, it is implicit in this paper that all graphs are equipped with the graph distance.
We use the version of the CRT which is constructed from a normalized Brownian excursion $ \mathbf{e}$, see \cite[Section 2]{LG05}, and we will denote it by $ \mathcal{T}_{ \mathbf{e}}$. If $ \mathcal{M}$ is a metric space, the notation $ c \cdot  \mathcal{M}$ stands for the metric space obtained from $ \mathcal{M}$ by multiplying all distances by $c>0$. We are now ready to state our main result. 

 \medskip 

\begin{theorem} \label{thm:main} Let $\mu$ be a probability measure on $\{0,2,3,\ldots\}$ of mean $1$ and assume that  $ \sum_{i \geq 0} e^ { \lambda  i} \mu_{i}< \infty$ for some $\lambda>0$.  Set $\mu_{0}+ \mu_{2} + \mu_{4} + \cdots = \mu_{2 \mathbb{\mathbb Z_+}}$ and let $ \sigma^2 \in (0, \infty)$ be the variance of $ \mu$. Finally set $ c( \mu)=c_{ \mathrm{tree}}(\mu) \cdot c_{ \mathrm{\mathrm{geo}}}(\mu)$, where
$$ c_{ \mathrm{tree}}(\mu) := \frac{2}{ \sigma \sqrt{\mu_{0}}}, \qquad  c_{ \mathrm{\mathrm{geo}}}(\mu) := \frac{1}{4}\left( \sigma^2+ \frac{ \mu_0 \mu_{2 \mathbb{Z_+}}}{2\mu_{2 \mathbb{Z_+}}- \mu_0} \right).$$
Then the following convergence holds in distribution for the Gromov--Hausdorff topology
  \begin{eqnarray} \frac{1}{ \sqrt{n}} \cdot \mathcal{D}^\mu_{n} & \quad \xrightarrow[n\to\infty]{(d)} \quad &     c( \mu) \cdot  \mathcal{T}_{ \mathbf{e}}.  \label{eq:thm}\end{eqnarray}  \end{theorem}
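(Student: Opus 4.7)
The natural plan is to pass through the dual tree construction. Given a dissection $\omega$ of $\mathcal{P}_n$, mark one of its polygon edges and build a plane tree $\phi(\omega)$ whose vertices are the inner faces of $\omega$, adjacent vertices corresponding to faces sharing a diagonal, rooted at the face containing the marked edge. An inner face of degree $d$ becomes a tree vertex of out-degree $d-1$; leaves of $\phi(\omega)$ correspond to the remaining polygon edges. Under $\mathbb{P}^\mu_n$, the tree $\phi(\mathcal{D}^\mu_n)$ is then a critical $\mu$-Galton--Watson tree conditioned to have $\sim n$ leaves, which is the content of the referenced Proposition \ref{prop:GW}.

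With the tree in hand, the factor $c_{\mathrm{tree}}(\mu)$ follows from the scaling-limit theorem of Kortchemski/Rizzolo type for critical GW trees conditioned on the number of leaves: under the exponential-moment assumption,
\[
\frac{1}{\sqrt{n}}\cdot\phi(\mathcal{D}^\mu_n)\;\xrightarrow[n\to\infty]{(d)}\;c_{\mathrm{tree}}(\mu)\cdot\mathcal{T}_{\mathbf{e}}
\]
in the Gromov--Hausdorff sense. The factor $2/\sigma$ is Aldous' usual normalisation, while the extra $1/\sqrt{\mu_0}$ comes from the law of large numbers $\#\mathrm{leaves}/\#\mathrm{vertices}\to \mu_0$, so that a $\mu$-GW tree with $n$ leaves has typical total size $\sim n/\mu_0$.

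The heart of the proof is the second factor $c_{\mathrm{geo}}(\mu)$, which measures how distances in the dissection differ from distances in the tree. Fix two polygon vertices $u,v$; a geodesic between them in $\mathcal{D}^\mu_n$ traverses a sequence of inner faces that essentially follows the ancestral path between the corresponding leaves of $\phi(\mathcal{D}^\mu_n)$. Each face of degree $d$, crossed via an incoming and an outgoing edge, contributes the length of the shorter boundary arc between these two edges. I would introduce a Markov chain whose state records the \emph{profile} of the current face, namely its degree together with the position of the incoming edge, and whose transition corresponds to jumping to the next face along the dual path. Its stationary distribution can be identified explicitly using the GW structure, and averaging the per-step contribution against it gives
\[
\frac{d_{\mathcal{D}}(u,v)}{d_{\phi}(u,v)}\;\longrightarrow\;c_{\mathrm{geo}}(\mu).
\]
The $\sigma^2/4$ part arises from size-biased averaging of face degrees, while the correction $\mu_0\mu_{2\mathbb{Z}_+}/(2\mu_{2\mathbb{Z}_+}-\mu_0)$ reflects a genuine parity effect: in an even-degree face one can ``jump across'' along a distinguished opposite edge, saving one unit of distance compared with odd-degree faces, and the chain must keep track of these parities. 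Combining the three steps by building an explicit correspondence between $n^{-1/2}\cdot\mathcal{D}^\mu_n$ and $c_{\mathrm{geo}}(\mu)\cdot n^{-1/2}\cdot\phi(\mathcal{D}^\mu_n)$ with vanishing distortion then yields \eqref{eq:thm} with $c(\mu)=c_{\mathrm{tree}}(\mu)\cdot c_{\mathrm{geo}}(\mu)$.

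The main obstacle lies in the distance-comparison step. A pointwise ergodic theorem for the face-profile chain only controls \emph{typical} pairs, whereas Gromov--Hausdorff convergence requires uniform control of $d_{\mathcal{D}}(u,v)/d_{\phi}(u,v)$ over all $\binom{n}{2}$ pairs simultaneously. One therefore needs quantitative large-deviation bounds on the Markov chain along paths of length $\sqrt{n}$, sharp enough to survive a union bound. Proving geometric ergodicity of the chain under the exponential-moment hypothesis on $\mu$, and transferring the resulting fluctuation estimates from ancestral paths to arbitrary pairs of polygon vertices, is where the bulk of the technical work should concentrate.
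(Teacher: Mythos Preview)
Your overall strategy is the paper's: dual tree, the Kortchemski--Rizzolo scaling limit yielding $c_{\mathrm{tree}}(\mu)$, a Markov chain along dual geodesics yielding $c_{\mathrm{geo}}(\mu)$, large deviations for this chain, and a union bound over all pairs. You have also put your finger on the right technical bottleneck. Two points in your sketch of the chain are inaccurate, however, and would lead you to the wrong constant if implemented as written.

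First, the per-face cost is \emph{not} simply ``the shorter boundary arc between the incoming and outgoing edge''. The dissection geodesic sits at a specific \emph{vertex}, namely one endpoint of the current separating diagonal; when it crosses the next face it may stay on the same side, switch sides (paying an extra $+1$ to cross the diagonal), or find both options equidistant and \emph{defer the choice}. The paper's chain accordingly has state $P_n\in\{\mathrm L,\mathrm R,\mathrm U\}$ (left/right/undetermined), collapsed by symmetry to $\{\mathrm D,\mathrm U\}$; the face degree and entry position are \emph{not} part of the state. Second, the parity correction is not about ``jumping across'' an even face. It arises because the transitions into and out of the undetermined state $\mathrm U$ are governed by the parity of the offspring number along the spine: from $\mathrm D$ one enters $\mathrm U$ exactly when the children split as $(k,k{+}1)$, and from $\mathrm U$ one stays in $\mathrm U$ exactly when they split as $(k,k)$. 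The stationary law of this two-state driving chain is $\pi(\mathrm D)=\mu_{2\mathbb Z_+}/(\mu_{2\mathbb Z_+}+\mu_{2\mathbb N})$, $\pi(\mathrm U)=\mu_{2\mathbb N}/(\mu_{2\mathbb Z_+}+\mu_{2\mathbb N})$, and since $\mu_{2\mathbb Z_+}+\mu_{2\mathbb N}=2\mu_{2\mathbb Z_+}-\mu_0$ this is precisely where the denominator in $c_{\mathrm{geo}}(\mu)$ comes from. The paper then passes from root-to-leaf to arbitrary leaf pairs by the rotational invariance of Boltzmann dissections (re-rooting the planted tree at a uniform leaf), which is the concrete mechanism behind your ``transferring from ancestral paths to arbitrary pairs''.
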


The reason why the constant $c( \mu)$ is split into two parts is explained below. 
 \bigskip

\noindent \textbf{Examples.} \quad  Let us give a  few important special cases (see  Section \ref{examples} for other examples).
\begin{itemize}
\item { \bf {Uniform} $p$-angulations.}
Consider an integer $ p \geq 3$. If $ \mu^{(p)}_0 = 1-1/(p-1)$, $ \mu^{(p)}_ {p-1}=1/(p-1)$ and $ \mu^{(p)}_i=0$ otherwise, then $\mathbb{P} ^ {\mu^{(p)}}_ {n}$ is the uniform measure over all $p$-angulations of  $ \mathcal{P}_{n}$ (in that case, we must restrict our attention to values of $n$ such that $n-2$ is a multiple of $p-2$
). We thus get
$$ c(  \mu^{(p)})=\frac{p}{2 \sqrt{p-1}} \ \  \mbox{ for $p$ even \quad ($p \geq 4$)}  \quad\mbox{ and}   \quad c(  \mu^{(p)})=\frac{(p+1)\sqrt{p-1}}{2p} \ \ \mbox{ for $p$ odd \quad ($p \geq 3$)}.$$
It is interesting to note that $c(  \mu^{(p)})$ is increasing in $p$.
\item{ \bf Uniform dissections.}  If $ \mu_0 = 2 - \sqrt {2}, \mu_{1}=0$ and $ \mu_i= ((2 - \sqrt {2})/2) ^ {i-1}$ for every $i \geq 2$, then $ \mathbb{P} ^ {\mu}_ {n}$ is the uniform measure on the set of all dissections of $  \mathcal{P}_{n}$ (see \cite[Proposition 2.3]{CKdissections}). In this case, 
$$ c( \mu)= \frac{1}{7}(3 + \sqrt{2})2^{3/4}  \quad \simeq  \quad 1.0605.$$
\end{itemize}

If $\mu$ is critical but has a heavy tail, i.e. $\mu_{k} \sim c \cdot k^{-(1+\alpha)} $ as $ k \rightarrow \infty$ for fixed $\alpha \in (1,2)$ and $c>0$, a drastically different behavior occurs. Indeed, in the recent work \cite{CK13}, it is shown that the random metric space $ \mathcal{D}_{n}^{\mu}$, now renormalized by $n^{1/\alpha}$, converges towards the stable looptree of parameter $\alpha$ which is also introduced in \cite{CK13}.

 \bigskip

\noindent \textbf{Combinatorial applications.} \quad Theorem \ref{thm:main} implies that $ \Es{F(\mathcal{D}^\mu_{n}/\sqrt{n})} \rightarrow \Es{F(c(\mu)\cdot\mathcal{T}_{ \mathbf{e}})}$ as $ n \rightarrow \infty$ for every bounded continuous function $F$ (defined on the set of compact metric spaces) with respect to the Gromov--Hausdorff topology. By controlling the speed of convergence in Theorem \ref {thm:main}, we will actually show that the last convergence holds more generally for functions  $F$  such that  $F( \mathcal{M}) \leq  C \cdot \mathsf{Diam}(\mathcal{M})^p$ for every compact metric space $  \mathcal{M}$ and fixed $C,p > 0$, where $ \mathsf{Diam}( \cdot)$ stands for the diameter, which is by definition the maximal distance between two points in a compact metric space.  

As a consequence, we obtain the asymptotic behavior of all positive moments of different statistics of $\mathcal{D}^\mu_{n}$, such as the diameter, the radius or the height of a random vertex, see Section \ref{sec:moments}. For instance,  in the case of uniform dissections, we get 
 \begin{eqnarray*} \mathbb{E}\Big[\mathsf{Diam}( \mathcal{D}_{n}^\mu)\Big] &  \displaystyle \quad\mathop{ \sim}_{n \rightarrow \infty} \quad  &
  \frac{1}{21}(3 + \sqrt{2}) 2^{9/4} \ \sqrt{ \pi n} \quad \simeq \quad 1.7723 \ \sqrt{n}.\end{eqnarray*}
This strengthens a result of \cite[Section 5]{DMN12}.

 \bigskip

\noindent \textbf{Strategy of the proof  and organization of the paper.} \quad We have deliberately split the scaling constant appearing in \eqref{eq:thm} into two parts in order to reflect the two main steps of the proof. 
 
 First, in Section \ref{sec:duality},  we associate with every dissection $\mathcal{D}_{n}^{\mu}$ a ``dual'' tree denoted by $ \phi( \mathcal{D}_{n}^{\mu})$ (see Figure \ref{fig:dual}). It turns out that $ \phi( \mathcal{D}_{n}^{\mu})$ is a Galton--Watson tree with offspring distribution $\mu$ and conditioned on having  $n-1$ leaves (Proposition \ref{prop:GW}). Since the work of Aldous, it is well known that, under a finite variance condition, Galton--Watson trees conditioned on having $n$ vertices, and scaled by  $ \sqrt{n}$, converge towards the Brownian CRT. Here, the conditioning is different and involves the number of leaves. However, such a situation was studied in \cite{Kor12,Riz11} and it follows that $\phi( \mathcal{D}_{n}^{\mu})/\sqrt{n}$ converges in distribution towards $c_{ \mathrm{tree}}(\mu)   \cdot  \mathcal{T}_{ \mathbf{e}}$.

The second step consists in showing that the random metric spaces $\mathcal{D}_{n}^{\mu}$ and $\phi( \mathcal{D}_{n}^{\mu})$ are roughly proportional to each other, the proportionality constant being
precisely $ c_{ \mathrm{\mathrm{geo}}}(\mu)$.
To this end, we show that the length of a geodesic in $ \mathcal{D}_{n}^{\mu}$ starting from the root and targeting a typical vertex  is described by an exploration algorithm indexed by the associated geodesic in the tree $ \phi( \mathcal{D}_{n}^{\mu})$. See Section \ref{sec:geoexplo} for precise statements. In order to obtain some information on the asymptotic behavior of this exploration procedure, we first study in Section \ref{sec:makov} the case of the critical Galton--Watson tree conditioned to survive where the geodesic exploration yields a Markov chain. For each step along the geodesic in the tree, the mean increment (with respect to the stationary distribution of the Markov chain) along the geodesic in the dissection is precisely $ c_{ \mathrm{\mathrm{geo}}}( \mu)$. 
In Section \ref{sec:proof}, we then control all the distances in $ \phi( \mathcal{D}_\mu^n)$  by using large deviations for the Markov chain. This allows us to estimate the Gromov--Hausdorff distance between $\mathcal{D}_{n}^{\mu}$ and $\phi( \mathcal{D}_{n}^{\mu})$ (Proposition \ref {prop:utile}) and  yields Theorem \ref{thm:main}.

Last, we develop  in Section \ref{sec:appl} applications and extensions of Theorem \ref{thm:main}. In particular, we study the asymptotics of positive moments of several statistics of $\mathcal D_n^{\mu}$ and set up a result similar to Theorem \ref{thm:main} for the scaling limits of  discrete looptrees associated to large Galton--Watson trees.
 
\bigskip 
Let us also mention that in \cite{AlMar08}, Albenque and Marckert proved a result similar to Theorem \ref{thm:main} for the uniform stack triangulations. Their approach also relies on a comparison of the distances in the graphs and in some dual trees. See also \cite{JanSte14, Bet14} for other examples of random maps that are not trees and that converge towards the Brownian CRT.  

 \bigskip

\noindent \textbf{Acknowledgments.} \quad We are indebted to Marc Noy for stimulating discussions concerning non-crossing configurations.

\section{Duality with trees and exploration of geodesics}
\subsection{Duality with trees}
 \label{sec:duality}

We briefly recall the
formalism of discrete plane trees which can be found in \cite{LG05} for
example. Let $\N=\{1,2\ldots\}$ be the set of positive integers and let $ \mathcal{U}$ be the set of labels
$$ \mathcal{U}=\bigcup_{n=0}^{\infty} (\N)^n,$$
where by convention $(\N)^0=\{\varnothing\}$. An element of $ \mathcal{U}$ is
a sequence $u=u_1 \cdots u_m$ of positive integers, and we set
$|u|=m$, which represents the generation, or height, of $u$. If $u=u_1
\cdots u_m$ and $v=v_1 \cdots v_n$ belong to $ \mathcal{U}$, we write $uv=u_1
\cdots u_m v_1 \cdots v_n$ for the concatenation of $u$ and $v$. A  \emph{plane tree} $\tau$ is then a finite or infinite subset of $
\mathcal{U}$ such that:
\begin{itemize}
\item[1.] $\varnothing \in \tau$,
\item[2.] if $v \in \tau$ and $v=uj$ for some $j \in \N$, then $u
\in \tau$,
\item[3.] for every $u \in \tau$, there exists an integer $k_u(\tau)
\geq 0$ (the number of children of $u$) such that, for every $j \in \N$, $uj \in \tau$ if and only
if $1 \leq j \leq k_u(\tau)$.
\end{itemize}
In the following,  \emph{tree} will always mean plane tree.  We will view each vertex of a tree $\tau$ as an individual of
a population whose $\tau$ is the genealogical tree.  The vertex $\varnothing$ is the ancestor of this population and {is} called the root. Every vertex $u \in \tau$ of degree $1$ is then called a leaf and the number of leaves of  $\tau$ is denoted by $\lambda( \tau)$. 
Last, for all $u, v \in \tau$, we denote by $\llbracket u,v \rrbracket$ the discrete geodesic path between $u$ and $v$ in $\tau$.

If $\tau$ is a plane tree, we denote by $\tau^\bullet$ the tree obtained from $\tau$ by attaching a leaf to the bottom of the root of $\tau$, and by rooting the resulting tree at this new leaf. Formally, we set $\tau^\bullet = \{ \varnothing\} \cup \{1u, u \in \tau\}$, and say that $\tau^\bullet$  is a planted tree.

\bigskip

For $n \geq 3$, we denote by $ \mathbf{D}_{n}$ the set of all the dissections of $\mathcal{P}_{n}$, and let $$ \overline{k}= \exp\left(\frac{-2  \mathrm{i}{k} \pi}{n}\right), \qquad 
\ 0 \leq k \leq n-1,$$ be the vertices of any dissection of $ \mathbf{D}_{n}$ {(the dependence in $n$ is implicit)}. {Given a dissection $ \mathcal{D} \in \mathbf{D}_n$, we construct a rooted plane tree as follows: Consider the dual graph
of $ {\mathcal{D}}$, obtained by placing a vertex inside each face of
$ \mathcal{D}$ and outside each side of the polygon  $\mathcal P_{n}$ and by
joining two vertices if the corresponding faces share a common edge,
thus giving a connected graph without cycles. This plane tree is rooted at the leaf adjacent to the edge $(\overline{0}, \overline{n-1})$  and is denoted  by $\phi( \mathcal{D})^\bullet$. Note that the root of $\phi( \mathcal{D})^\bullet$ has a unique child. Re-rooting the tree at this unique child and removing the former root and its adjacent edge gives a tree $\phi( \mathcal{D})$ with no vertex with exactly one child, whose planted version is  $\phi( \mathcal{D})^\bullet$. See Fig.\,\,\ref{fig:dual} below.

\begin{figure*}[h!]
\begin{center}
\includegraphics[height=5cm]{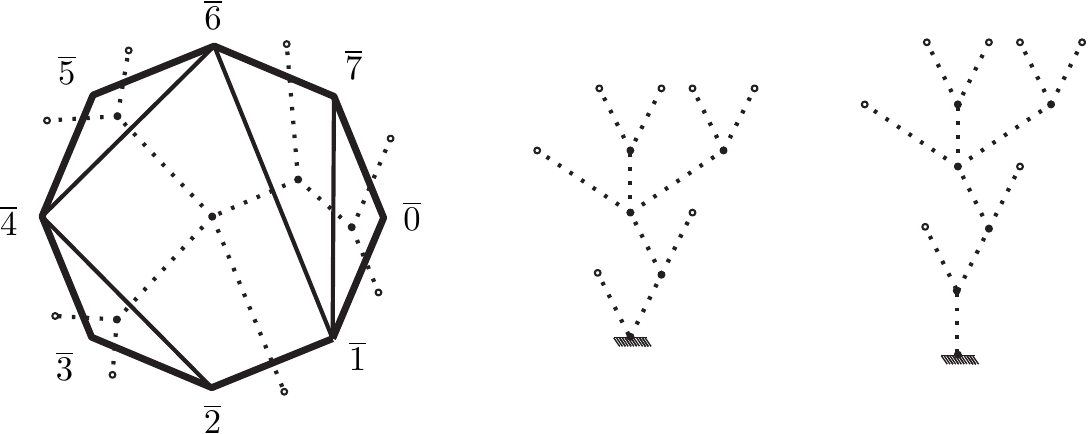}
\caption{\label{fig:dual} A dissection  $ \mathcal{D}$ of $ \mathcal{P}_{8}$ and its associated trees $ {\phi}(\mathcal{D})$ and $ \phi( \mathcal{D})^\bullet$. }
\end{center}
\end{figure*}

For $n \geq 3$, it is easy to see that the  application $  \phi$ is a bijection between $\mathbf{D}_{n}$ and the set of all  plane trees with $n-1$ leaves such that there is no vertex with exactly one child. For symmetry reasons, it will be more convenient to work with the planted tree $\phi( \mathcal{D})^\bullet$ rather than $\phi( \mathcal{D})$  (see e.g. \eqref{eq:reroot} below). However,  we also consider $\phi( \mathcal{D})$ because of its simple probabilistic description. If $\rho$ is a probability measure on $\mathbb Z_+$ such that $\rho(1)<1$,  the law of the Galton--Watson tree with offspring distribution $\rho$ is denoted by $ \mathsf{GW}_\rho$.

\begin{proposition}[{\cite{Kor12}, see also \cite{CKdissections}}] \label{prop:GW} Let $\mu$ be a probability distribution over $\{0,2,3,4\ldots\}$ of mean $1$. For every $n$ such that $ \mathsf{GW}_{\mu}( \lambda(\tau)=n-1)>0$, the dual tree $ \phi( \mathcal{D}_{n}^\mu)$ of a random dissection distributed according to $ \mathbb{P}_{n}^\mu$ is distributed according to $ \mathsf{GW}_{\mu}(  \, \cdot \, \mid \lambda(\tau)=n-1).$
\end{proposition}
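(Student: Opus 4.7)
The plan is to compare, for each tree $\tau$ in the image of the bijection $\phi \colon \mathbf{D}_n \to \mathbf{T}_{n-1}^\ast$ (where $\mathbf{T}_{n-1}^\ast$ denotes plane trees with $n-1$ leaves and no vertex having exactly one child), the Boltzmann weight of $ \mathcal{D} = \phi^{-1}(\tau)$ to the Galton--Watson weight $\prod_{v \in \tau} \mu_{k_v(\tau)}$. I expect these to be proportional with a ratio depending only on $n$, at which point the proposition will follow by normalization, together with the observation that the requirement $\tau \in \mathbf{T}_{n-1}^\ast$ is automatic under $\mathsf{GW}_\mu$ since $\mu_1 = 0$.

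To carry this out, I would first identify, using the definition of $\phi$, the internal vertices of $\tau$ with the inner faces of $\mathcal{D}$. An inner face $f$ of degree $d$ is adjacent to $d$ other faces or outer sides, so the associated vertex of $\phi(\mathcal{D})^\bullet$ has $d$ neighbors in the dual graph and therefore $d - 1$ children whenever it is not the planted root. A brief check of the re-rooting shows that after removing the planted leaf and re-rooting at its unique child, that child also ends up with $d - 1$ children. So every internal vertex $v$ of $\tau$ satisfies $k_v(\tau) = \deg(f) - 1$ for its associated face $f$. Combined with the fact that $\tau$ has exactly $n - 1$ leaves, this gives
$$\mathsf{GW}_\mu(\tau) = \prod_{v \in \tau} \mu_{k_v(\tau)} = \mu_0^{n-1} \prod_{f \textrm{ inner face of } \mathcal{D}} \mu_{\deg(f) - 1} = \mu_0^{n-1} \, Z_n \, \mathbb{P}_n^\mu(\mathcal{D}).$$

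Hence the push-forward of $\mathbb{P}_n^\mu$ under $\phi$ is proportional to $\mathsf{GW}_\mu$ restricted to $\mathbf{T}_{n-1}^\ast$, and since both sides (after summation over a common finite set) are probability measures they must coincide. This shows that $\phi(\mathcal{D}_n^\mu)$ has the law of $\mathsf{GW}_\mu$ conditioned on $\{\tau \in \mathbf{T}_{n-1}^\ast\}$. To finish, I would note that under $\mathsf{GW}_\mu$ almost every tree has no vertex with exactly one child (since $\mu_1 = 0$), so this event coincides up to a null set with $\{\lambda(\tau) = n-1\}$. The only delicate point in the whole argument is the degree-to-children translation at the root of $\phi(\mathcal{D})$, which must account for the removal of the planted leaf; once this is verified, the remainder is routine bookkeeping.
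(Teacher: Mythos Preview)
Your argument is correct: the key identity $k_v(\tau)=\deg(f)-1$ for every internal vertex (including the root, after accounting for the removal of the planted leaf) yields $\mathsf{GW}_\mu(\tau)=\mu_0^{\,n-1}Z_n\,\mathbb{P}_n^\mu(\phi^{-1}(\tau))$, and the conclusion follows by normalization together with $\mu_1=0$. The paper does not give its own proof of this proposition but simply cites \cite{Kor12,CKdissections}; your write-up is precisely the standard computation underlying those references, so there is nothing further to compare.
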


This result explains the factor $c_{ \mathrm{tree}}( \mu)$ in the scaling constant $c(\mu)$ appearing in Theorem \ref{thm:main}. Indeed, {if we further} assume that $\mu$ has finite variance $\sigma^2$, then from \cite{Riz11,Kor12}, a $ \mathsf{GW}_{\mu}$ tree conditioned on having $n$ leaves and scaled by  $n^{-1/2}$ converges in distribution towards $c_{ \mathrm{tree}}(\mu)   \cdot  \mathcal{T}_{ \mathbf{e}}$ as $n \rightarrow \infty$. This is mainly due to the fact that  a $ \mathsf{GW}_{\mu}$ tree conditioned on having $n$ leaves is very close to a $ \mathsf{GW}_{\mu}$ tree conditioned on having $\mu_{0}^{-1}n$ vertices (see \cite{Kor12}), combined with the well-known result of Aldous %\cite{Ald93}
 on the convergence of a $ \mathsf{GW}_{\mu}$ tree conditioned on having $n$ vertices and scaled by  $n^{-1/2}$, towards  $2 \sigma^{-1} \cdot \mathcal{T}_{ \mathbf{e}}$.  Hence
$$n^{-1/2} \cdot  \phi( \mathcal{D}_{n}^\mu)   \quad\mathop{\longrightarrow}^{(d)}_{n \rightarrow \infty} \quad   c_{ \mathrm{tree}}( \mu) \cdot \mathcal{T}_{ \mathbf{e}},$$ in distribution for the Gromov--Hausdorff topology. Obviously, the same statement holds when $  \phi( \mathcal{D}_{n}^\mu)$ is replaced by $ \phi( \mathcal{D}_{n}^{\mu})^\bullet$. \medskip

\noindent \textbf{Remark.} \quad  The criticality condition on $\mu$ and the fact that $ \mu$ is a probability measure are  not as restrictive as it could appear. Indeed, starting from  a sequence $(\mu_{i})_{i \geq 2}$ of nonnegative real numbers (recall that the definition of $\mathbb{P}_{n}^{\mu}$ does not involve $ \mu_{0}$ nor $ \mu_{1}$), one can easily build a critical probability measure $\nu$ such that $ \mathbb{P}_{n}^{\nu}= \mathbb{P}_{n}^{\mu}$,  provided that there exists $\lambda>0$ such that $\sum_{i \geq 2} i \lambda^{i-1} \mu_i=1$ (for example, such a $\lambda$ always exists when $ \sum_{i \geq 2} i\mu_{i} \in [1,\infty)$, but additional assumptions are needed otherwise). Indeed, in that case,  set
 $$ \nu_{0}= 1- \sum_{i \geq 2} \lambda^{i-1}\mu_{i},  \qquad \nu_{1}=0, \qquad \nu_{i} = \lambda^{i-1}\mu_{i} \qquad   {( i \geq 2)},$$
 which defines a critical probability measure. Then it is easy to check (see e.g.\,\,the proof of \cite[Proposition 2.3]{CKdissections}) that $ \mathbb{P}_{n}^{\nu}= \mathbb{P}_{n}^{\mu}$.

\subsection{Geodesics in the dissection} \label{sec:geoexplo}

Now that we have associated a dual tree with each dissection, we shall see how to find the geodesics in the dissection using the geodesics in its dual tree.

We fix a dissection $ \mathcal{D} \in \mathbf{D}_{n}$. By the rotational invariance of the model we shall only describe geodesics in $ \mathcal{D}$ from the vertex $ \overline{0}$. 
Let $ \varnothing = \ell_{0}, \ell_{1}, \ldots , \ell_{n-1}$ be the $n$ leaves of $ {\phi}( \mathcal{D})^\bullet$ in clockwise order. Our first observation states that the geodesics in the dissection stay very close to their dual geodesics in the tree.

\begin{proposition} \label{prop:geo1}For every $ k \in \{0,1, \ldots , n-1\}$, the dual edges of a geodesic path from $ \overline{0}$ to $ \overline{k}$ in $ \mathcal{D}$ are {all} {adjacent to} the geodesic path $\llbracket \ell_{0}, \ell_{k}\rrbracket$ in ${ \phi}( \mathcal{D})^\bullet$.
\end{proposition}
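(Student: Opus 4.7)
The plan is to show that any geodesic $\pi$ from $\overline{0}$ to $\overline{k}$ in $\mathcal{D}$ stays inside the ``strip'' of faces traced out by the dual geodesic $\pi^{*} = \llbracket \ell_0, \ell_k \rrbracket$. Writing $\pi^{*} = (\ell_0, F_1, F_2, \ldots, F_r, \ell_k)$, where $F_1, \ldots, F_r$ are the internal faces visited by $\pi^{*}$, I would set $S = \bar F_1 \cup \cdots \cup \bar F_r \subset \mathcal{P}_n$. Because $\pi^{*}$ is a path in a tree, $S$ is a simply connected compact region, and both $\overline{0}$ and $\overline{k}$ lie on its boundary $\partial S$. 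A direct unpacking of the definitions shows that an edge $e$ of $\mathcal{D}$ has its dual edge adjacent to $\pi^{*}$ if and only if $e$ lies in $\bar S$; the proposition therefore reduces to showing that every edge of $\pi$ lies in $\bar S$.

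The complement $\mathcal{P}_n \setminus S$ decomposes into open sub-polygons $R_1, R_2, \ldots$, and by the non-crossing property of a dissection each $R_j$ is bounded by exactly one diagonal $d_j = (\overline{a_j}, \overline{b_j})$ of $\mathcal{D}$ together with an arc of $\partial \mathcal{P}_n$. A useful preliminary observation is then that any chord of $\mathcal{P}_n$ whose two endpoints lie on $\partial S$ must itself lie in $\bar S$: otherwise the chord would sit inside some $R_j$, but its endpoints would then belong to $\partial R_j \cap \partial S = \{\overline{a_j}, \overline{b_j}\}$, forcing the chord to coincide with $d_j \subset \partial S$.

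The core of the argument is to prove that $\pi$ never visits a polygon vertex strictly inside some $R_j$. Suppose for contradiction that it did: since diagonals of $\mathcal{D}$ do not cross, the interior of $R_j$ can only be accessed from outside through one of the vertices $\overline{a_j}$ or $\overline{b_j}$, and simplicity of $\pi$ then forces it to enter $R_j$ through one of them (say $\overline{a_j}$) and leave through the other, traversing a subpath of length at least two entirely inside $R_j$. Replacing this subpath by the single edge $d_j$, which belongs to $\mathcal{D}$ and cannot already appear elsewhere in $\pi$ (for otherwise $\overline{a_j}$ or $\overline{b_j}$ would be visited twice), would yield a strictly shorter simple path from $\overline{0}$ to $\overline{k}$, contradicting that $\pi$ is a geodesic. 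Consequently every vertex of $\pi$ lies on $\partial S$; by the preliminary observation every edge of $\pi$ then lies in $\bar S$, proving the proposition. The main delicate point is this ``shortcut'' substitution, which is where the planar non-crossing structure of the dissection is essential---it is exactly what guarantees that an excursion into a complementary region strictly costs more than the single diagonal bounding it.
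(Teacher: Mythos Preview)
Your proof is correct and follows essentially the same approach as the paper's. The paper's own proof is a two-sentence sketch (``clear on a drawing'') whose content is exactly your core observation: each complementary region $R_j$ is separated from the strip $S$ by a single edge $d_j$ of $\mathcal{D}$, so any excursion of a geodesic into $R_j$ could be shortcut by $d_j$. You have simply written out in full the topological and combinatorial details that the paper leaves to the picture.
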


\proof The proof is clear on a drawing (see Fig.\,\ref{fig:geodesic}, where $k=12$ and where the geodesic $\llbracket \ell_{0}, \ell_{k}\rrbracket$ in ${ \phi}( \mathcal{D})^\bullet$ is in bold). A geodesic in $ \mathcal{D}$ going from $ \overline{0}$ to $ \overline{k}$ will only use edges of $ \mathcal{D}$ that belong to the faces crossed by the geodesic path $\llbracket \ell_{0}, \ell_{k}\rrbracket$ in $ \phi(\mathcal{D})^\bullet$ (which are the white faces in Fig.\,\ref{fig:geodesic}). Indeed, it is easy to see that such a geodesic in $ \mathcal{D}$ will never enter the other faces 
(which are shaded in gray in Fig.\,\ref{fig:geodesic}), since any one of these faces is separated from the rest by a single edge of $ \mathcal{D}$. \endproof

\bigskip

\noindent \textbf{A local iterative construction.} \quad We now detail how to 
obtain a geodesic  going from $ \overline{0}$ to $ \overline{k}$ 
in $ \mathcal{D}$ by an iterative ``local'' 
construction 
 along the 
geodesic $\llbracket \ell_{0}, \ell_{k}\rrbracket$ in the dual tree $ \phi(\mathcal{D})^\bullet$ (note that there may exist several geodesics  going from $ \overline{0}$ to $ \overline{k}$ 
in $ \mathcal{D}$, our procedure only produces one of them). Before doing so, let us make a couple of observations and introduce 
 a piece of  notation.

 \medskip 

Fix $ k \in \{1, \ldots n-1\}$. Let $ h$ be the number of edges of $ \llbracket \ell_{0}, \ell_{k}\rrbracket$ ($h$ is the height of $ \ell_{k}$ in $ \phi( \mathcal{D})^\bullet$) 
 and denote by $w_{0},w_{1}, \ldots, w_{h}$  the vertices of $ \llbracket \ell_{0}, \ell_{k}\rrbracket$ (ordered in increasing height). Next, 
for every $0 \leq i \leq h-1$, let $e_{i}$ be the edge of $ \mathcal{D}$ which is dual to the edge $w_{i}w_{i+1}$ of $ \phi(\mathcal{D})^\bullet$. For $0 \leq i \leq h-1$, the endpoint of $e_{i}$ which is located on the left, resp.\,right, of $ \llbracket \ell_{0}, \ell_{k}\rrbracket$ (when oriented from $\ell_{0}$ to $ \ell_{k}$) is denoted by $e_{i}^{ \mathrm{L}}$, resp.\,$e_{i}^{ \mathrm{R}}$ (note that one may have $e_{i+1}^{ \mathrm{R}}=e_{i}^{ \mathrm{R}}$, and similarly for $ \mathrm{L}$). See Fig. \ref{fig:geodesic}.

Consider now $ \mathcal{G}= \{ \overline{0}=x_{0}, x_{1}, \ldots , x_{m}= \overline{k}\}$ the set of all the vertices of a geodesic in $ \mathcal{D}$ going from $ \overline{0}$ to $ \overline{k}$. An easy geometrical argument shows that for every $i \in \{0, \ldots , h-1\}$, if the  edge $e_{i}$ together with its endpoints is removed from $ \mathcal{D}$, then the vertices $ \overline{0}$ and $ \overline{k}$ become disconnected (or absent) in $\mathcal{D}$. Hence, for every $0 \leq i \leq h-1$,  at least one of the endpoints $e_{i}^R$ or $e_{i}^L$ of the edge $e_{i}$ belongs to $ \mathcal{G}$. Furthermore, the geodesic $ \mathcal{G}$ visits  $e_{0}, e_{1}, \ldots , e_{h-1}$ in this order (we say that $ \mathcal{G}$ visits an edge $e$ if one of the endpoints of $e$ belongs to $ \mathcal{G}$) and for every $1 \leq i \leq h-1$, after $ \mathcal{G}$ has  visited $e_{i}$, $ \mathcal{G}$ will not visit $e_{j}$ for every $0 \leq  j < i$.  Finally, we denote by $ \mathrm{d_{ \mathcal{D}}}$ the graph distance in the dissection $ \mathcal{D}$.
\medskip 

\noindent \textsc{The algorithm $ \mathsf{Geod}( \overline {k})$.} We now present 
an algorithm called $ \mathsf{Geod}( \overline {k})$  that constructs  ``step-by-step'' a geodesic in $\mathcal D$ going from $ \overline{0}$ to $ \overline{k}$. Formally, we shall iteratively construct a path $\mathscr{P}= \{ y_{0}, y_{1}, \ldots \}$ 
of vertices going from $ \overline{0}$ to $ \overline{k}$ 
together with a sequence of integers $(s_{i} : 0 \leq i \leq h)$ such that  the cardinal of  $\mathscr {P}$ is $s_{h}+1$ and, for every $ i \in \{ 0, 1, \ldots , h-1\}$, $$s_{i} = \inf\{ j \geq 0 : y_{j}  = e^R_{i} \textrm{ or } e^L_{i}\}$$ 
(this infimum will always be reached). The induction procedure will be on $i  \in \{ 0, 1, \ldots , h\}$.
For $i \leq h-1$, we will not always know at stage $i$ if $y_{s_{i}} = e_{i}^{\mathrm L}$ or $y_{s_{i}} = e_{i}^{\mathrm R}$. In the cases when this is known, we define the position $p_{i} \in \{\mathrm{L} , \mathrm{R}\}$ through $y_{s_{i}} = e_{i}^{p_{i}}$ and say that the position is ``determined''. Otherwise we set $p_i={\mathrm U}$ and say that the position is ``undetermined".

The induction then proceeds as follows. First, set $ y_{0}=  \overline {0}$, so that $s_{0}=0$ and $p_{0}= \mathrm{L}$. Also, for reasons that will appear later, let $ \mathcal{I}$ be an empty set. Then, recursively for $i \in \{0,1, \ldots , h-2\}$,
assume that  $  \{s_{0}, s_{1},\ldots, s_{i}\}$ and  $  \{p_{0}, p_{1},\ldots, p_{i}\}$ have been constructed, as well as $\{y_{s_0},y_{s_1},\ldots,y_{s_i} \}$ in the cases where $p_i \in \{\mathrm{L} , \mathrm{R}\}$.  Denote by $g_{i}$ the number of edges of $ \phi( \mathcal{D})^\bullet$ adjacent to $w_{i+1}$ that are strictly on the left of $ \llbracket \ell_{0}, \ell_{k}\rrbracket$ and let $E^{g}_{i}$ be set of edges in $ \mathcal{D}$ that are dual to those edges. Similarly, let $d_{i}$ be the number of edges adjacent to $w_{i+1}$ that are strictly on the right of $ \llbracket \ell_{0}, \ell_{k}\rrbracket$ and let $E^{d}_{i}$ be set of edges in $ \mathcal{D}$ that are dual  to those edges.

\begin{figure}[!h]
  \begin{center}
  \includegraphics[width=1\linewidth]{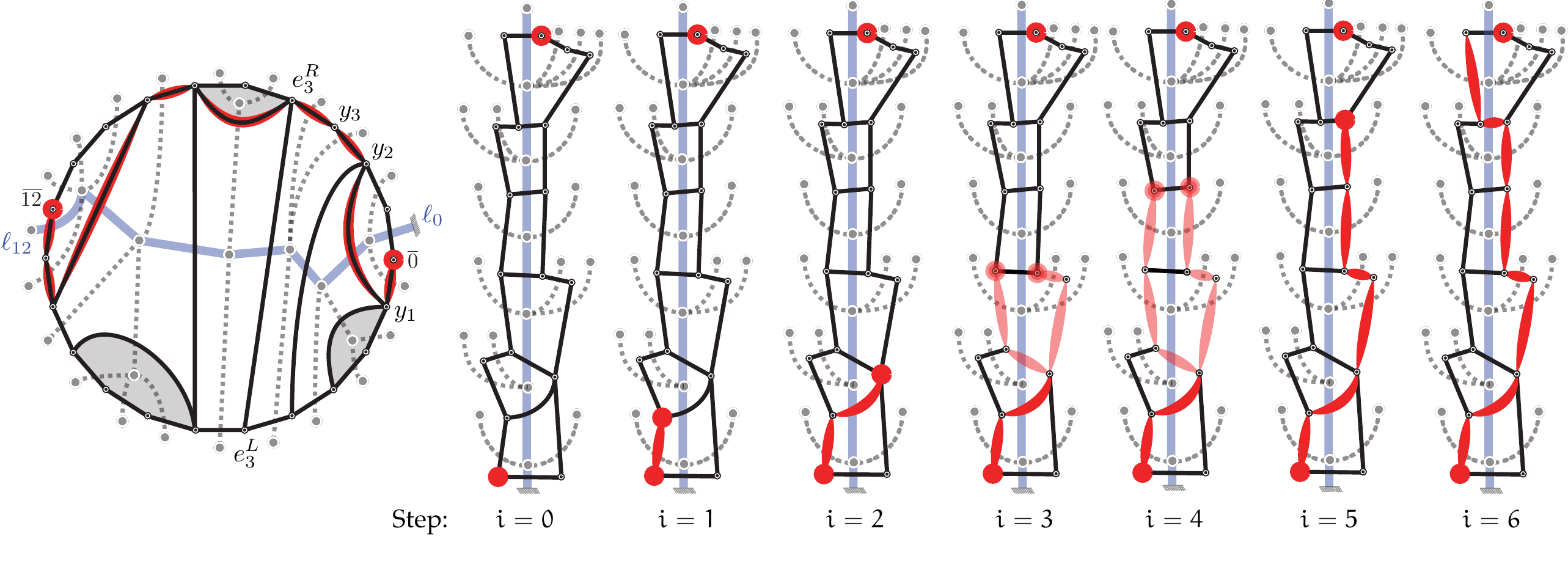}
  \caption{\label{fig:geodesic}Illustration of the steps of the algorithm constructing a geodesic between $ \overline {0}$ and $ \overline{12}$ in $ \mathcal{D}$.  The undetermined steps are in light color.}
  \end{center}
  \end{figure}
  
We now want to build a shortest path in $\mathcal D$ from the current position $y_{s_i} \in \{e_{i}^{ \mathrm{L}},e_{i}^{ \mathrm{R}}\}$ to $\overline k$. 
 In that aim, we have to decide whether  $ y_{s_{i+1}}=e_{i+1}^{ \mathrm{L}}$ or $y_{s_{i+1}}=e_{i+1}^{ \mathrm{R}}$ or if we have to wait for a further step to decide whether the right or left position is best.
  Note that $|\mathrm{d_{ \mathcal{D}}}(e_{i+1}^{ \mathrm{L}}, \overline{k})-\mathrm{d_{ \mathcal{D}}}(e_{i+1}^{ \mathrm{R}}, \overline{k})| \leq 1$ since $ \mathrm{d_{ \mathcal{D}}}(e_{i+1}^{ \mathrm{L}},e_{i+1}^{ \mathrm{R}}) =1$. Hence in order to choose whether $ y_{s_{i+1}}=e_{i+1}^{ \mathrm{L}}$ or $y_{s_{i+1}}=e_{i+1}^{ \mathrm{R}}$, we have to compare  $ \mathrm{d}_{ \mathcal{D}}(y_{s_{i}}, e_{i+1}^{L})$ with  $ \mathrm{d}_{ \mathcal{D}}(y_{s_{i}}, e_{i+1}^{R})$. 
  
  There are five different cases:
  \begin{itemize} 
  \item \textsc{The position stays determined and stays on the same side of $\llbracket \ell_{0}, \ell_{k}\rrbracket$:} If $ {p}_{i} =  \mathrm{L}$ and $g_{i} \leq d_{i}$. In this case (in Fig. \ref {fig:geodesic}, this happens for $i=0$), we have $ \mathrm{d}_{ \mathcal{D}}(e_{i}^{ \mathrm{L}}, e_{i+1}^{ \mathrm{L}}) < \mathrm{d}_{ \mathcal{D}}(e_{i}^{ \mathrm{L}}, e_{i+1}^{ \mathrm{R}})$, hence we 
  add to $ \mathscr{P}$ the vertices visited when walking along the  edges of $E_{i}^g$  (here and later, we do not add a vertex to $ \mathscr {P}$  if it is already present in $ \mathscr {P}$) and set $$s_{i+1} = s_{i} + g_{i} \quad  \mbox{and} \quad  y_{s_{i+1}}= e_{i+1}^{ \mathrm{L}} \  \text{ (hence }p_{i+1}= \mathrm{L}).$$
\quad The case  $ {p}_{i} =  \mathrm{R}$ and $d_{i} \leq g_{i}$ is similar: in this case, we add to $ \mathscr{P}$ the vertices visited when walking along the edges of $ E ^{d}_{i}$, and set $s_{i+1} = s_{i} + d_{i}$ and $ y_{s_{i+1}}= e_{i+1}^{ \mathrm{R}}$ (hence $p_{i+1}= \mathrm{R}$).
 
  \item \textsc{The position stays determined and changes sides:} If $p_{i} = \mathrm{L}$ and $d_{i}+1 < g_{i}$. In this case (in Fig. \ref {fig:geodesic}, this happens for $i=1$) we have $ \mathrm{d}_{ \mathcal{D}}(e_{i}^{ \mathrm{L}}, e_{i+1}^{ \mathrm{R}}) < \mathrm{d}_{ \mathcal{D}}(e_{i}^{ \mathrm{L}}, e_{i+1}^{ \mathrm{L}})$. We thus add to $ \mathscr{P}$  the vertex $ e_{i}^{R}$ as well as the vertices visited when walking along the edges of $ E ^{d}_{i}$. Then we set $s_{i+1}=s_{i} + 1 + d_{i}$ {and}   $y_{s_{i+1}}= e_{i+1}^{ \mathrm{R}}$ (hence $p_{i+1} = \mathrm{R}$).
  
\quad The case $p_{i} = \mathrm{R}$ and $g_{i}+1 < d_{i}$ is symmetric (in Fig. \ref {fig:geodesic}, this happens if $i=5$).

  \item \textsc{The position becomes undetermined:} If  $p_{i} = \mathrm{L}$ and $1+ d_{i} = g_{i}$, or if $p_{i} = \mathrm{R}$ and $1+ g_{i} = d_{i}$.
  In these cases (in Fig. \ref {fig:geodesic}, this happens for $i=2$),  we have $ \mathrm{d}_{ \mathcal{D}}(e_{i}^{p_{i}}, e_{i+1}^{ \mathrm{L}}) = \mathrm{d}_{ \mathcal{D}}(e_{i}^{ p_{i}}, e_{i+1}^{ \mathrm{R}})$ hence we cannot decide right away if $ y_{s_{i+1}}=e_{i+1}^{ \mathrm{L}}$ or $y_{s_{i+1}}=e_{i+1}^{ \mathrm{R}}$. We thus need to use the additional undetermined state $  \mathrm{U}$, and set $p_{i+1}= \mathrm{U}$. In this cases, we add no new vertices to the set $ \mathscr{P}$, but instead add  to the set $ \mathcal{I}$  the edges of $ E ^{g}_{i}$ and   $ E ^{d}_{i}$ (the set $ \mathcal{I}$ contains the so-called undetermined edges).
Moreover, in both cases, we set  $s_{i+1}=s_{i} + 1+ d_{i}=s_i+g_{i}$.
  
  \item \textsc{The position stays undetermined:} If $p_i = \mathrm{U}$ and $d_{i}=g_{i}$. In this case (in Fig. \ref {fig:geodesic}, this happens for $i=3$), since the position $p_{i}$ is either left or right, the distance between $y_{s_{i}}$ and  $e_i^{\mathrm{R}}$ or the distance between $y_{s_{i}}$ and  $e_i^{\mathrm{L}}$ can be chosen to be $d_{i}=g_{i}$. We thus stay undetermined and set $p_{i+1}= \mathrm{U}$ {and}    $s_{i+1}= s_i + d_{i}$. Furthermore, we add no new vertices to the set $ \mathscr{P}$, but add instead  the edges of $ E ^{g}_{i}$ and  $ E ^{d}_{i}$ to the set $ \mathcal{I}$.
  
    \item \textsc{The position becomes determined:} If $p_i = \mathrm{U}$ and $d_{i} \ne g_{i}$. In this case (in Fig. \ref {fig:geodesic}, this happens for $i=4$), if $d_{i}<g_{i}$,   then $\mathrm{d}_{ \mathcal{D}}(e_{i}^{ \mathrm{R}}, e_{i+1}^{ \mathrm{R}}) < \mathrm{d}_{ \mathcal{D}}(e_{i}^{ \mathrm{L}}, e_{i+1}^{ \mathrm{L}})$ and we set  $s_{i+1}=s_{i}+d_{i}$ and $y_{s_{i+1}}= e_{i+1}^{ \mathrm{R}}$ (hence $p_{i+1}= \mathrm{R}$). We then add to $ \mathscr {P}$ all the vertices visited when crossing the undetermined edges of $ \mathcal{I}$ which are on the right of $ \llbracket \ell_{0}, \ell_{k}\rrbracket$, and now set $ \mathcal{I}= \emptyset$.
    
\quad The case $g_{i}<d_{i}$ is symmetric.\end{itemize}
\textsc{Last step ($i=h-1$).} If $p_{i}= \mathrm{R}$, we set $s_{i+1}=s_{i}$. If $p_{i}= \mathrm{L}$ (in Fig. \ref {fig:geodesic}, this happens for $i=6$), we add the endpoints of $e_{i}^{R}$ to $ \mathscr {P}$ and set $s_{i+1}=s_{i}+1$. Finally, if $p_{i}= \mathrm{U}$, we add to $ \mathscr {P}$ the vertices visited when walking along the edges of $E_{i}^d$   and set $s_{i+1}=s_{i}$. 

\bigskip

This finishes the construction of the path $ \mathscr {P}$. The following result  should be clear (see Fig. \ref{fig:geodesic}):

\begin{proposition} \label {prop:geod} The path $ \mathscr {P}$ constructed by  $ \mathsf{Geod}( \overline {k})$ is a geodesic path in $ \mathcal{D}$ from $ \overline{0}$ to $ \overline{k}$ whose length is $s_{h}$. \end{proposition}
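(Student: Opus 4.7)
The plan is to prove the proposition by induction on the step index $i$, establishing simultaneously that $\mathscr{P}$ is a well-formed walk in $\mathcal{D}$ starting at $\overline{0}$ and that $s_i$ correctly records the minimum distance from $\overline{0}$ to the endpoints of $e_i$.

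First, I would set up a dynamic-programming identity for the true distances. By Proposition \ref{prop:geo1}, any geodesic from $\overline{0}$ to a vertex of $e_j$ in $\mathcal{D}$ uses only edges of the faces dual to $w_1,\ldots,w_j$ and must visit at least one endpoint of each $e_0,\ldots,e_{j-1}$, in that order. The face corresponding to $w_{i+1}$ is a simple cycle in $\mathcal{D}$ whose boundary decomposes into $e_i$, $e_{i+1}$, a left arc of length $g_i$ joining $e_i^{\mathrm{L}}$ to $e_{i+1}^{\mathrm{L}}$, and a right arc of length $d_i$ joining $e_i^{\mathrm{R}}$ to $e_{i+1}^{\mathrm{R}}$. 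Writing $\alpha_i = \mathrm{d}_{\mathcal{D}}(\overline{0}, e_i^{\mathrm{L}})$ and $\beta_i = \mathrm{d}_{\mathcal{D}}(\overline{0}, e_i^{\mathrm{R}})$, this yields the recursion
\[
\alpha_{i+1} = \min\bigl(\alpha_i + g_i,\ \beta_i + \min(g_i+1, d_i+1)\bigr), \qquad \beta_{i+1} = \min\bigl(\alpha_i + \min(g_i+1, d_i+1),\ \beta_i + d_i\bigr),
\]
with $\alpha_0 = 0$ and $\beta_0 = 1$.

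Second, I would verify inductively the invariant: $s_i = \min(\alpha_i,\beta_i)$, and $p_i$ identifies whether $\alpha_i < \beta_i$ (state $\mathrm{L}$), $\beta_i<\alpha_i$ (state $\mathrm{R}$), or $\alpha_i=\beta_i$ (state $\mathrm{U}$). Each of the five cases of the algorithm corresponds to one regime of the pair $(g_i,d_i)$ together with the current $p_i$, and a direct case-by-case arithmetic check using the recursion above shows that the algorithm's updates to $(s_{i+1},p_{i+1})$ reproduce $\min(\alpha_{i+1},\beta_{i+1})$ and its argmin.

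Third, I would argue that the vertex additions to $\mathscr{P}$ always extend the previous walk by $s_{i+1}-s_i$ new vertices forming a contiguous arc along the boundary of face $w_{i+1}$, possibly preceded by a crossing of the edge $e_i$ when the algorithm switches sides. In the $\mathrm{U}$ state the left and right arcs accumulated in $\mathcal{I}$ have equal length at every substep, and when the state becomes determined the arc on the committed side is grafted onto $\mathscr{P}$, producing a walk of the claimed length with no gap or duplication. The last step then bridges $e_{h-1}$ to $\overline{k}$ itself, adding one edge when on the opposite side. Combining these facts with the invariant at $i=h-1$ shows that $\mathscr{P}$ is a walk of length $s_h = \mathrm{d}_{\mathcal{D}}(\overline{0},\overline{k})$, hence a geodesic.

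The main obstacle is the treatment of the $\mathrm{U}$ state: since the algorithm defers the choice of side, one must verify that the deferred walk can always be realized consistently once the state becomes determined, i.e. that the count $s_i$ agrees with the actual edge-count of whichever committed arc is eventually chosen, and that this arc concatenates properly with the preceding portion of $\mathscr{P}$. This is the one place where the induction hypothesis must be formulated carefully enough to carry both the distance invariant and the walk-realizability simultaneously.
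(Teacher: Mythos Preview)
Your proposal is correct and is in fact substantially more detailed than what the paper offers: the paper's entire proof of this proposition is the single sentence ``The following result should be clear (see Fig.~\ref{fig:geodesic})''. So there is no alternative argument to compare against; you have simply written out what the authors left implicit.

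A couple of small remarks on your write-up. First, your recursion for $\alpha_{i+1}$ and $\beta_{i+1}$ can be simplified: since $|\alpha_i-\beta_i|\le 1$, the term $\beta_i+\min(g_i+1,d_i+1)$ is always dominated either by $\alpha_i+g_i$ or by $\beta_i+d_i+1$, so one gets the cleaner form $\alpha_{i+1}=\min(\alpha_i+g_i,\beta_i+d_i+1)$ and symmetrically for $\beta_{i+1}$; this makes the case check slightly shorter. Second, you correctly flag the delicate point: when the $\mathrm{U}$ state resolves, the committed arc must be concatenated with the last determined vertex, which may require crossing the edge $e_j$ at the step where the state first became undetermined. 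The count $s_{i+1}-s_j$ already accounts for that crossing (this is exactly the ``$+1$'' in $s_{j+1}=s_j+1+d_j$), so the bookkeeping works out, but it is worth making this explicit since the paper's description of which vertices get added to $\mathscr{P}$ in this case is terse.

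In short: your dynamic-programming induction is the right way to make the proposition rigorous, and the obstacle you identify is genuine but surmountable with the invariant you propose.
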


In the sequel, we will only be interested in the length $s_h$ of this specific geodesic going from $ \overline {0}$ to $ \overline k$. 
Recall that $h$ is the height of $ \ell_{k}$ in $ \phi( \mathcal{D})^\bullet$. The explicit construction of $ \mathscr{P}$ implies that the sequence $(g_{n},d_{n},p_{n},s_{n})_{0 \leq n \leq h-1}$ obtained when running $ \mathsf{Geod}( \overline {k})$ satisfies $s_{0}=0$, $p_{0}=L$, and then  for every $0 \leq n \leq h-2$, setting $\Delta s_{n+1} = s_{n+1}-s_{n}$:
\begin{enumerate}

\item[$\bullet$] If $p_n=\mathrm R$, \quad  $\begin{array}{ll}\text{if }d_{n}< g_{n}+1 & \text{then } (\Delta s_{n+1},p_{n+1})=(d_{n},\mathrm R) 
\\ \text{if }d_{n}>g_{n}+1  & \text{then } (\Delta s_{n+1},p_{n+1})=(g_{n}+1,\mathrm L) 
\\ \text{if }d_{n}=g_{n}+1  & \text{then } (\Delta s_{n+1},p_{n+1})=(d_{n},\mathrm U); \end{array}$

\item[$\bullet$] If $p_n=\mathrm L$, \quad $\begin{array}{ll}\text{if }g_{n}< d_{n}+1 & \text{then } (\Delta s_{n+1},p_{n+1})=(g_{n},\mathrm L) 
\\ \text{if }g_{n}>d_{n}+1  & \text{then } (\Delta s_{n+1},p_{n+1})=(d_{n}+1,\mathrm R) 
\\ \text{if }g_{n}=d_{n}+1  & \text{then } (\Delta s_{n+1},p_{n+1})=(g_{n},\mathrm U); \end{array}$
\item[$\bullet$] If $p_n=\mathrm U$,  \quad $\begin{array}{ll}\text{if }d_{n}<g_{n} & \text{then } (\Delta s_{n+1},p_{n+1})=(d_{n},\mathrm R) \\ \text{if }d_{n}>g_{n}  & \text{then } (\Delta s_{n+1},p_{n+1})=(g_{n},\mathrm L) \\ \text{if }d_{n}=g_{n}  & \text{then } (\Delta s_{n+1},p_{n+1})=(d_{n}, \mathrm U).\end{array}$ 
\end{enumerate}
Now set $H_{\phi( \mathcal{D})^\bullet}( \ell_{k})= s_{h-1}$. Since $|s_{h}-s_{h-1}| \leq 1$ by construction,  we get from Proposition \ref {prop:geod} that   \begin{align} \label{eq:procheun} \big|d_{ \mathcal{D}}(\overline{0}, \overline{k})- H_{ \phi(\mathcal{D})^\bullet}(\ell_{k})\big| &\leq 1. \end{align} 

For later use, we now extend the definition of $H_{ \tau}(u)$ to general  trees $ \tau$ and every vertex $ u \in \tau$ (not only leaves). To this end, denote by $ \tau_{[u]}$ the subtree of $\tau$ formed by the vertices of $ \llbracket \varnothing, u\rrbracket$ together with the children of vertices belonging to $ \rrbracket \varnothing, u\llbracket$. Note that when $ \tau$ is a finite tree and $ u \in \tau$ is a leaf, then by the previous discussion $H_{\tau}(u)$ only depends on $\tau_{[u]}$. Hence, for $ \tau$  a possibly infinite tree and $u$ any vertex of $\tau$, we can set 
$$H_{ \tau}(u) := H_{ \tau_{[u]}}(u) \text{ when } u \ne \varnothing, \text{ and } H_{ \tau}(\varnothing)=0.$$
\section{A Markov chain} \label{sec:makov}

In the remaining sections, $ \mu$ denotes a probability distribution on $\{0,2,3, \ldots \}$ with mean 1 and such that  $ \sum_{i \geq 0} e^ { \lambda  i} \mu_{i}< \infty$ for some $\lambda>0$. 
To prove Theorem \ref {thm:main}, it will be important to describe the asymptotic behavior of the length of a typical geodesic of the random dissection $\mathcal D_n^{\mu}$  as $n \rightarrow \infty$.  To this end, the first step is to understand the behavior of the algorithm $ \mathsf{Geod}$ when run on
the spine of the critical Galton--Watson tree conditioned to survive. This can informally be seen as the ``unconditioned version'', where we gain some independence (specifically, the variables $(g_i,d_i)$ of the last section become i.i.d.). In that setting, the algorithm $ \mathsf{Geod}$ yields a true Markov chain whose asymptotic behavior is studied in Section \ref{sec:mc}. The second step, carried out later in Section \ref {sec:proof}, consists in going back to the ``conditioned version''  $ \GW_{ \mu}$.

\subsection{The critical Galton--Watson tree conditioned to survive}
 \label{section:GWinfinite}

If $\tau$ is a tree and $k \geq 0$, we let $[\tau]_{k}=  \{u \in \tau : \, |u| \leq k\}$ denote the subtree of $\tau$ composed by its first $k$ generations. 
We denote by $T_{n}$ a Galton--Watson tree with offspring distribution $\mu$, conditioned on
having height at least $n \geq 0$. Kesten \cite[Lemma 1.14]{Kes86} showed that for every $k \geq 0$, the convergence
  \begin{eqnarray*} \left[T_{n}\right]_{k} & \quad \xrightarrow[n\to\infty]{(d)} \quad & \left[T_{\infty}\right]_{k}, \end{eqnarray*}
  holds in distribution, where $T_{\infty}$ is a random infinite plane tree called the critical $ \GW_{ \mu}$ tree conditioned to survive. Since we mainly consider planted trees, let us describe the law of $T_{\infty}^\bullet$.  We follow
\cite{Kes86,LP10}. 
First let $\mu^\star$ be the size-biased
distribution of $\mu$, defined by $ \mu^\star_{k} = k \mu_{k}$
for every $k \geq 0$. Next, let $(C_i)_ {i \geq 1}$ be a sequence of i.i.d. random variables distributed according to 
$\mu^\star$ and let $C_{0}=1$. Conditionally on $(C_i)_ {i \geq 0}$, let $(V_{i+1})_ {i \geq 0}$ be a sequence of independent random variables such that   $V_{k+1}$ is uniformly distributed over $\{ 1, 2, \ldots , C_{k}\}$, for every $k \geq 0$.  Finally, let $W_{0}= \varnothing$ and $W_k=V_{1} V_{2} \ldots V_{k}$ for $k \geq 1$.

The infinite tree $T_{ \infty}^\bullet$ has a unique spine, that is a unique infinite path
$(W_{0}, W_{1}, W_{2},  \ldots)$ and, for $k \geq 0$, $W_{k}$ has $C_k$ children. Then,
conditionally on $(V_i)_ {i \geq 1}$ and $(C_i)_ {i \geq 0}$, all children of $W_k$ except $W_{k+1}$, $\forall k \geq 1$, have independent $ \GW_{ \mu}$ descendant trees, see Fig.\,\ref{critical}.  

\begin{figure}[!h]
  \begin{center}
  \includegraphics[height=5cm]{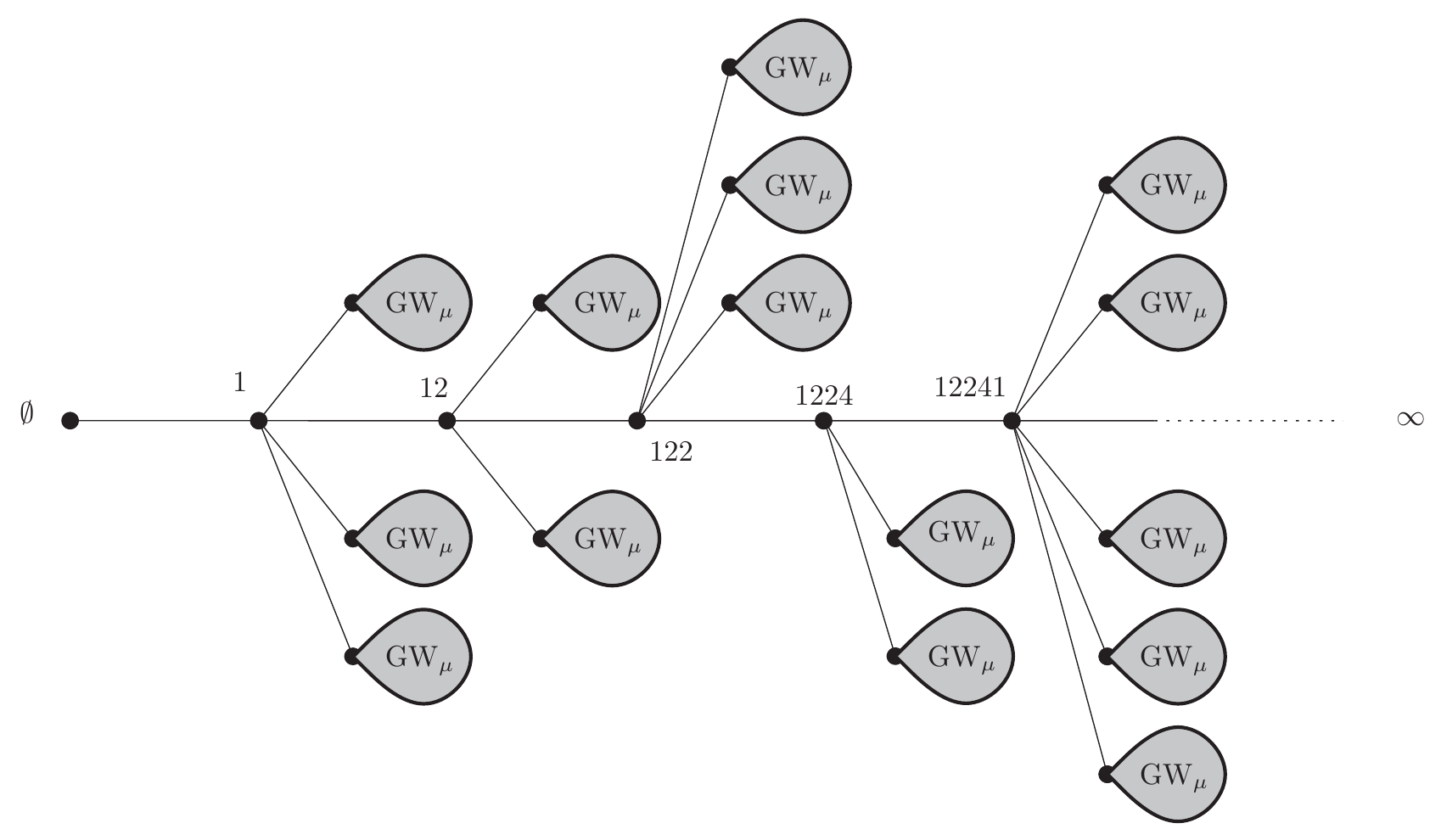}
  \caption{\label{critical} An illustration of $T_{\infty}^\bullet$.}
  \end{center}
  \end{figure}

The following result states a useful relation between a standard $\mathsf{GW}_\mu$  and the infinite version $T^{ \bullet}_\infty$ (see e.g. \cite[Chapter 12.1]{LP10} for a proof when $T^{ \bullet}_\infty$ is replaced by $T_\infty$). We let $\mathbb T$ denote the set of all discrete plane trees.

\begin{proposition} \label{prop:biais} For every measurable function $ F:  \mathbb {T} \times \mathcal{U} \rightarrow \R_{+}$ and for every $n \geq 0$, we have
{$$ \GW_{ \mu}\left[ { \sum_{u \in \tau^\bullet, |u|=n} F\big([\tau^\bullet]_{n},u\big)}\right]=  \mathbb{E}\Big [ F\big( [T_{\infty}^\bullet]_{n},W_{n} \big)\Big]$$}
\end{proposition}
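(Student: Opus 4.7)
The plan is to prove this by a direct computation comparing the two sides after conditioning on the truncated tree. This is a standard "spinal decomposition" or size-biasing identity, and the main task is careful bookkeeping with the definitions of planted trees and truncation.

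\textbf{Step 1: Reduce both sides to a sum over finite planted trees.} For any finite planted tree $\mathbf{t}^\bullet$ of height $n$ (by convention $k_\varnothing(\mathbf{t}^\bullet)=1$), the first move is to write
\[\GW_\mu\left[\sum_{u\in\tau^\bullet,\,|u|=n} F([\tau^\bullet]_n,u)\right] = \sum_{\mathbf{t}^\bullet} \GW_\mu\bigl([\tau^\bullet]_n=\mathbf{t}^\bullet\bigr) \sum_{u\in\mathbf{t}^\bullet,\,|u|=n} F(\mathbf{t}^\bullet,u),\]
and to compute analogously the right-hand side as $\sum_{\mathbf{t}^\bullet}\sum_{u\in\mathbf{t}^\bullet,\,|u|=n} \mathbb{P}([T_\infty^\bullet]_n=\mathbf{t}^\bullet,\,W_n=u)\,F(\mathbf{t}^\bullet,u)$. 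The proposition therefore reduces to the pointwise identity
\[\GW_\mu\bigl([\tau^\bullet]_n=\mathbf{t}^\bullet\bigr)\;=\;\mathbb{P}\bigl([T_\infty^\bullet]_n=\mathbf{t}^\bullet,\,W_n=u\bigr)\qquad\text{for every } u\in\mathbf{t}^\bullet,\,|u|=n,\]
whose right-hand side must turn out to be independent of the choice of $u$.

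\textbf{Step 2: Compute the GW side.} Since $\tau^\bullet$ has a deterministic root with a single child, the branching property of $\GW_\mu$ gives
\[\GW_\mu\bigl([\tau^\bullet]_n=\mathbf{t}^\bullet\bigr)\;=\;\prod_{v\in\mathbf{t}^\bullet,\,1\leq|v|\leq n-1}\mu_{k_v(\mathbf{t}^\bullet)}.\]

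\textbf{Step 3: Compute the size-biased side using the explicit construction of $T_\infty^\bullet$.} Fix $u\in\mathbf{t}^\bullet$ with $|u|=n$ and let $u^{(k)}$ denote its ancestor at height $k$, with $d_k:=k_{u^{(k)}}(\mathbf{t}^\bullet)$ (so $d_0=1$). The event $\{[T_\infty^\bullet]_n=\mathbf{t}^\bullet, W_n=u\}$ occurs iff (i) $W_k=u^{(k)}$ for all $k\leq n$, (ii) $C_k=d_k$ for $k=0,\dots,n-1$, (iii) each of the $d_k-1$ other children of $W_k$ carries the prescribed truncated $\GW_\mu$-subtree of $\mathbf{t}^\bullet$. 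Using independence and the distributions of $C_k,V_{k+1}$ (noting $C_0=1$ is forced), this probability equals
\[\prod_{k=1}^{n-1}\underbrace{\mu^\star_{d_k}}_{=d_k\mu_{d_k}}\cdot\prod_{k=1}^{n-1}\frac{1}{d_k}\cdot\prod_{k=0}^{n-1}\;\prod_{\substack{1\leq j\leq d_k\\ j\neq V_{k+1}}}\GW_\mu\bigl([\tau]_{n-k-1}=s_{k,j}\bigr),\]
where the $s_{k,j}$ are the prescribed non-spine subtrees. The key cancellation $\mu^\star_{d_k}\cdot d_k^{-1}=\mu_{d_k}$ handles the spine, and applying Step 2's formula inductively to each $s_{k,j}$ turns the last product into $\prod_v \mu_{k_v(\mathbf{t}^\bullet)}$ over all off-spine vertices $v$ with $1\leq|v|\leq n-1$. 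Combining,
\[\mathbb{P}\bigl([T_\infty^\bullet]_n=\mathbf{t}^\bullet,\,W_n=u\bigr)=\prod_{v\in\mathbf{t}^\bullet,\,1\leq|v|\leq n-1}\mu_{k_v(\mathbf{t}^\bullet)},\]
which indeed does not depend on $u$ and matches Step 2.

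\textbf{Step 4: Conclude.} Plugging the pointwise identity of Step 1 into the decompositions yields the claimed equality after summing over $u$ and $\mathbf{t}^\bullet$. The only real difficulty is notational — keeping track of the shift caused by the planting and of which vertices at depth $n$ versus depth $< n$ contribute to the product — so I would state Step 2 as a short preliminary lemma to keep Step 3 clean. The heart of the argument is the trivial-looking identity $\mu^\star_k/k=\mu_k$, which is precisely the tilting built into Kesten's construction.
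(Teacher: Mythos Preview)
Your argument is correct: the reduction to the pointwise identity in Step~1 is valid, the product formula in Step~2 is the branching property of $\GW_\mu$ (with the harmless shift due to planting), and the cancellation $\mu^\star_{d_k}/d_k=\mu_{d_k}$ in Step~3 is exactly what makes Kesten's construction work. One cosmetic point: in Step~1 you should restrict the outer sum to planted trees $\mathbf t^\bullet$ of height exactly $n$ (as you do say), since on the right-hand side $[T_\infty^\bullet]_n$ always has height $n$; trees of smaller height contribute zero to the left-hand side anyway because the inner sum over $u$ is empty.

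As for comparison with the paper: there is nothing to compare. The paper does not prove Proposition~\ref{prop:biais} at all; it simply states the result and refers to \cite[Chapter~12.1]{LP10} for a proof of the unplanted version. Your write-up is therefore strictly more than what the paper provides, and is precisely the standard spinal-decomposition computation one finds in that reference, adapted to the planted setting.
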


\subsection {The Markov chain}
\label {sec:mc}
Recall the definition of $H$ at the end of Section \ref {sec:geoexplo}. Set $S_{0}=0$ and for $n \geq 1$, set
 \begin{eqnarray} \label{eq:snh} S_{n} = H_{T_{\infty}^\bullet}(W_{n+1}).  \end{eqnarray}
Informally, $(S_{n})_{n\geq 0}$ is the length process of a path of minimal length in the ``dual dissection'' of $T_{\infty}^{\bullet}$ starting from the root and running along the spine of $T^{ \bullet}_{\infty}$.  The goal of this section is to prove the almost sure convergence of $n^{-1}S_n$ towards $c_{ \mathrm{geo}}$ (where $c_{ \mathrm{geo}}$ is the second factor in the constant $c(\mu)$ of Theorem \ref{thm:main}) and then to establish large deviations estimates. These will be useful to deduce Theorem \ref{thm:main} in  Section \ref {sec:proof}.

 \medskip 

 By analogy with the notation of Section \ref{sec:geoexplo}, for $i \geq 0$, we let $G_{i} = V_{i+2}-1$ be the number of children of $W_{i+1}$ on the left of the spine and similarly we let $D_{i} = C_{i+1} -V_{i+2}$ be the number of children of $W_{i+1}$ on the right of the spine. We then build a Markov chain $(X_n,P_n)_{n \geq0}$ with values in $\mathbb Z_+ \times \{\mathrm{R,L,U} \}$ following the procedure of the Section \ref{sec:geoexplo}. 
Formally the evolution of this chain is given by the following rules.
First, $X_0=0$, $P_0=\mathrm L$. Next\begin{enumerate}
\item[$\bullet$] If $P_n=\mathrm R$, \quad $\begin{array}{ll}\text{if }D_{n}< G_{n}+1 & \text{then } (X_{n+1},P_{n+1})=(D_{n},\mathrm R) 
\\ \text{if }D_{n}>G_{n}+1  & \text{then } (X_{n+1},P_{n+1})=(G_{n}+1,\mathrm L) 
\\ \text{if }D_{n}=G_{n}+1  & \text{then } (X_{n+1},P_{n+1})=(D_{n},\mathrm U); \end{array}$

\item[$\bullet$] If $P_n=\mathrm L$, \quad 
$\begin{array}{ll}\text{if }G_{n}< D_{n}+1 & \text{then } (X_{n+1},P_{n+1})=(G_{n},\mathrm L) 
\\ \text{if }G_{n}>D_{n}+1  & \text{then } (X_{n+1},P_{n+1})=(D_{n}+1,\mathrm R) 
\\ \text{if }G_{n}=D_{n}+1  & \text{then } (X_{n+1},P_{n+1})=(G_{n},\mathrm U); \end{array}$
\item[$\bullet$] If $P_n=\mathrm U$, \quad $\begin{array}{ll}\text{if }D_{n}<G_{n} & \text{then } (X_{n+1},P_{n+1})=(D_{n},\mathrm R) \\ \text{if }D_{n}>G_{n}  & \text{then } (X_{n+1},P_{n+1})=(G_{n},\mathrm L) \\ \text{if }D_{n}=G_{n}  & \text{then} (X_{n+1},P_{n+1})=(D_{n}, \mathrm U). \end{array}$ 
\end{enumerate}
 From the discussion following Proposition \ref {prop:geod}}, we have $ \displaystyle S_{n} = X_{0}+ \cdots+ X_{n}$ for every $n \geq 0$.

 \bigskip
 
The transition probabilities from $(X_n,P_n)$ to $(X_{n+1},P_{n+1})$ only depend on the value of $P_n$.  The process $(S_n,P_n)_{n \geq 0}$ therefore belongs to the family of so-called \textit{Markov additive processes} (see e.g. \cite{Cinlar72}) and $(P_n)_{n \geq 0}$ is called its \textit{driving chain}. To simplify notation, set $\overline \mu_k=\sum_{i \geq k} \mu_i$ for $k \geq 0$. From the explicit distribution of $(C_i,V_{i+1})_{i \geq 1}$ (note that they are i.i.d) we easily calculate the transition probabilities of $(X_{n},P_{n})$: 
For all $i \geq 0$,
\begin{align*}
\mathbb P\left(X_{n+1}=i, P_{n+1}=\mathrm R \ | \ P_n= \mathrm R \right)&=\mathbb P\left(X_{n+1}=i, P_{n+1}=\mathrm L \ | \ P_n= \mathrm L \right)=\overline \mu_{2i+1} \\
\mathbb P\left(X_{n+1}=i, P_{n+1}=\mathrm L \ | \ P_n=\mathrm R \right)&=\mathbb P\left(X_{n+1}=i, P_{n+1}=\mathrm R \ | \ P_n=\mathrm L \right)=\overline \mu_{2i+1}\mathbbm{1}_{\{i \geq 1\}} \\
\mathbb P\left(X_{n+1}=i, P_{n+1}=\mathrm U \ | \ P_n=\mathrm R \right)&=\mathbb P\left(X_{n+1}=i, P_{n+1}=\mathrm U \ | \ P_n=\mathrm L \right)=\mu_{2i}\mathbbm{1}_{\{i \geq 1\}}
\end{align*}
and, 
\begin{eqnarray*}
&&\mathbb P\left(X_{n+1}=i, P_{n+1}=\mathrm R \ | \ P_n=\mathrm U \right)=\mathbb P\left(X_{n+1}=i, P_{n+1}=\mathrm L \ | \ P_n=\mathrm U \right)=\overline \mu_{2i+2} \\
&&\mathbb P\left(X_{n+1}=i, P_{n+1}=\mathrm U \ | \ P_n=\mathrm U \right)=\mu_{2i+1}.
\end{eqnarray*}
Note that  the right and left positions $ \mathrm{R}$ and $ \mathrm{L}$ play symmetrical roles. Hence, with a slight abuse of notation, we will consider from now on that $P_n$ can take only two values: $ \mathrm D$ (for Determined) or $ \mathrm U$, with the convention that $P_n=\mathrm D$ if and only if $P_n \in \{\mathrm L,\mathrm R\}$. From the previous calculations, we thus get for every $i \geq 0,$
\begin{eqnarray*}
\mathbb P\left(X_{n+1}=i, P_{n+1}=\mathrm D \ | \ P_n= \mathrm D \right) &=& \overline \mu_{2i+1} +\overline \mu_{2i+1}\mathbbm{1}_{\{i \geq 1\}}\\
\mathbb P\left(X_{n+1}=i, P_{n+1}=\mathrm U \ | \ P_n=\mathrm D \right)&=&\mu_{2i}\mathbbm{1}_{\{i \geq 1\}} \\
\mathbb P\left(X_{n+1}=i, P_{n+1}=\mathrm D \ | \ P_n=\mathrm U \right)&=&2\overline \mu_{2i+2} \\
\mathbb P\left(X_{n+1}=i, P_{n+1}=\mathrm U \ | \ P_n=\mathrm U \right)&=&\mu_{2i+1}.
\end{eqnarray*}
Recall that  $\mu_{2\mathbb Z_+}=\sum_{i \geq 0} \mu_{2i}$ and let $\mu_{2\mathbb N}=\sum_{i \geq 1} \mu_{2i}$ and $\mu_{2\mathbb N+1}=\sum_{i \geq 0} \mu_{2i+1}$. The previous discussion leads to the following description of the driving chain $(P_{n})$. 
\begin{lemma}
\label{LemmaDriving}
The driving chain $(P_n)$ has the following transition probabilities:
\begin{eqnarray*}
&& \mathbb P\left(P_{n+1}=\mathrm D \ | \ P_n= \mathrm D \right)= \mu_{2 \mathbb N+1}+\mu_0 =1-\mathbb P\left(P_{n+1}=\mathrm U \ | \ P_n= \mathrm D \right) \\
&& \mathbb P\left(P_{n+1}=\mathrm D \ | \ P_n= \mathrm U \right)= \mu_{2 \mathbb Z_+}=1-\mathbb P\left(P_{n+1}=\mathrm U \ | \ P_n= \mathrm U \right).
\end{eqnarray*}
This chain is irreducible and aperiodic if and only if $\mu_{2 \mathbb N}>0$. In this case, its stationary distribution $\pi$ is
$$
\pi(\mathrm D)=\frac{\mu_{2\mathbb Z_+}}{\mu _{2\mathbb Z_+}+ \mu _{2\mathbb N}}, \quad \quad \pi(\mathrm U)=\frac{\mu_{2\mathbb N}}{\mu _{2\mathbb Z_+}+ \mu _{2\mathbb N}}.
$$
\end{lemma}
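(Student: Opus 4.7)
The plan is straightforward: marginalize the joint transition probabilities of $(X_{n+1}, P_{n+1})$ displayed just before the lemma by summing over $i$, and then solve the resulting two-state balance equations.

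First, to obtain the transitions of $(P_n)$, the cleanest route is to compute directly the probability of landing in $\mathrm U$:
\[
\mathbb P(P_{n+1} = \mathrm U \mid P_n = \mathrm D) = \sum_{i \geq 1} \mu_{2i} = \mu_{2\mathbb N}, \qquad \mathbb P(P_{n+1} = \mathrm U \mid P_n = \mathrm U) = \sum_{i \geq 0} \mu_{2i+1} = \mu_{2\mathbb N+1}.
\]
Taking complements and invoking the identity $\mu_0 + \mu_{2\mathbb N} + \mu_{2\mathbb N+1} = 1$ (which uses $\mu_1 = 0$), one recovers $1 - \mu_{2\mathbb N} = \mu_0 + \mu_{2\mathbb N+1}$ and $1 - \mu_{2\mathbb N+1} = \mu_{2\mathbb Z_+}$, matching the two remaining displayed probabilities.

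Second, for irreducibility I will observe that $\mathbb P(P_{n+1} = \mathrm U \mid P_n = \mathrm D) = \mu_{2\mathbb N}$ is positive exactly when $\mu_{2\mathbb N} > 0$, while $\mathbb P(P_{n+1} = \mathrm D \mid P_n = \mathrm U) = \mu_{2\mathbb Z_+} \geq \mu_0 > 0$ unconditionally---the strict inequality $\mu_0 > 0$ being forced by the fact that a probability measure on $\{0, 2, 3, \ldots\}$ with mean $1$ cannot avoid putting mass at $0$ (otherwise the mean would be at least $2$). Hence the chain is irreducible iff $\mu_{2\mathbb N} > 0$, and in that regime the self-loop probability $\mathbb P(\mathrm D \mid \mathrm D) \geq \mu_0 > 0$ immediately yields aperiodicity.

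Third, the stationary distribution drops out of the detailed balance equation that automatically holds for any irreducible two-state chain, namely $\pi(\mathrm D)\mu_{2\mathbb N} = \pi(\mathrm U)\mu_{2\mathbb Z_+}$; combined with $\pi(\mathrm D) + \pi(\mathrm U) = 1$ this yields the announced formulas. The whole argument reduces to elementary bookkeeping with the identity $\mu_0 + \mu_{2\mathbb N} + \mu_{2\mathbb N+1} = 1$, so I do not foresee any genuine obstacle beyond keeping track of the parity indexing.
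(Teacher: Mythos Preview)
Your proposal is correct and matches the paper's approach: the paper does not write out a proof but simply states that the lemma follows from ``the previous discussion,'' meaning exactly the marginalization over $i$ of the joint transition probabilities of $(X_{n+1},P_{n+1})$ that you carry out. Your handling of irreducibility, aperiodicity (via the self-loop $\mathbb P(\mathrm D\mid\mathrm D)\geq\mu_0>0$), and the two-state detailed balance for $\pi$ is the routine verification the paper leaves to the reader.
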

In order to establish a strong law of large numbers for $(S_n)$, it is useful to introduce the mean of a typical step of the driving chain in the stationary state:
$$
c_{ \mathrm{\mathrm{geo}}}( \mu) \quad := \quad \mathbb E_{\pi}[X_1]=\sum_{i\geq 0}i \mathbb P\left(X_1=i \ | \ P_0= \mathrm D \right)\pi(\mathrm D) + i \mathbb P\left(X_1=i \ | \ P_0= \mathrm U \right)\pi(\mathrm U).
$$
Note that this also makes sense when $\mu_{2 \mathbb N}=0$ since $P_0= \mathrm D$. We now give an explicit expression of  $c_{ \mathrm{\mathrm{geo}}}( \mu) $ in terms of $ \mu$. Recall that $\sigma^2$ denotes the variance of $\mu$.

\begin{lemma} We have $ \displaystyle 
c_{\mathrm{\mathrm{geo}}}( \mu)=\frac{1}{4}\left(\sigma^2+\frac{\mu_0 \mu _{2 \mathbb Z_+}}{2 \mu_{2 \mathbb Z_+}-\mu_0} \right)$.\end{lemma}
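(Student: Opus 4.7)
The strategy is to compute $A := \mathbb{E}[X_1 \mid P_0 = \mathrm D]$ and $B := \mathbb{E}[X_1 \mid P_0 = \mathrm U]$ separately, and then combine them using the stationary distribution provided by Lemma \ref{LemmaDriving}. Reading off the transition table displayed just before that lemma, the $i = 0$ contributions vanish, so
$$ A = \sum_{i \geq 1} i\bigl(2\overline{\mu}_{2i+1} + \mu_{2i}\bigr), \qquad B = \sum_{i \geq 1} i\bigl(2\overline{\mu}_{2i+2} + \mu_{2i+1}\bigr). $$
To handle the tails I would swap the order of summation: for instance
$$ \sum_{i \geq 1} 2i\,\overline{\mu}_{2i+1} = \sum_{k \geq 3}\mu_k \left\lfloor \frac{k-1}{2}\right\rfloor\!\left(\left\lfloor\frac{k-1}{2}\right\rfloor + 1\right), $$
and then splitting the sum according to the parity of $k$ leads, after a short reorganisation, to the clean expressions
$$ A = \sum_{m \geq 1}\bigl[m^2 \mu_{2m} + m(m+1)\mu_{2m+1}\bigr], \qquad B = \sum_{m \geq 1}\bigl[m(m-1)\mu_{2m} + m^2 \mu_{2m+1}\bigr]. $$

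Rather than simplifying $A$ and $B$ individually, I would next compute the two combinations $A + B$ and $A - B$, which turn out to be much nicer. For every $k \geq 2$, both the parities $k = 2m$ and $k = 2m + 1$ contribute $k(k-1)/2$ to $A + B$, so by criticality $\sum_k k\mu_k = 1$ and the identity $\sum_k k^2 \mu_k = \sigma^2 + 1$ one gets
$$ A + B = \tfrac{1}{2}\sum_{k \geq 2} k(k-1)\mu_k = \tfrac{\sigma^2}{2}. $$
On the other hand $A - B = \sum_{m \geq 1} m(\mu_{2m} + \mu_{2m+1}) = \sum_{k \geq 2} \lfloor k/2\rfloor \mu_k$, and writing $\lfloor k/2\rfloor = (k - \mathbbm{1}_{k\text{ odd}})/2$ together with criticality and $\mu_1 = 0$ yields $A - B = \mu_{2\mathbb{Z}_+}/2$. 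Thus $A = (\sigma^2 + \mu_{2\mathbb{Z}_+})/4$ and $B = (\sigma^2 - \mu_{2\mathbb{Z}_+})/4$.

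Finally, inserting the explicit stationary weights $\pi(\mathrm D) = \mu_{2\mathbb{Z}_+}/(2\mu_{2\mathbb{Z}_+} - \mu_0)$ and $\pi(\mathrm U) = (\mu_{2\mathbb{Z}_+} - \mu_0)/(2\mu_{2\mathbb{Z}_+} - \mu_0)$ (which remain valid, with $\pi(\mathrm U) = 0$, in the degenerate case $\mu_{2\mathbb N} = 0$ where the chain is frozen on $\mathrm D$) and rewriting
$$ c_{\mathrm{geo}}(\mu) = A\pi(\mathrm D) + B\pi(\mathrm U) = \frac{(A+B)\mu_{2\mathbb Z_+} - B \mu_0}{2\mu_{2\mathbb Z_+} - \mu_0}, $$
it remains to substitute $A + B = \sigma^2/2$ and $B = (\sigma^2 - \mu_{2\mathbb Z_+})/4$, and to collect terms, in order to reach the claimed expression. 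The only mildly delicate point in the whole argument is the Fubini and parity bookkeeping of the first step; as a sanity check I would verify the degenerate case $\mu_{2\mathbb N} = 0$, where $\mu_{2\mathbb Z_+} = \mu_0$ and the formula must reduce to $c_{\mathrm{geo}}(\mu) = A = (\sigma^2 + \mu_0)/4$, which indeed it does.
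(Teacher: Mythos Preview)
Your proof is correct and follows essentially the same route as the paper: both compute the two conditional means $A=\mathbb{E}[X_1\mid P_0=\mathrm D]=(\sigma^2+\mu_{2\mathbb Z_+})/4$ and $B=\mathbb{E}[X_1\mid P_0=\mathrm U]=(\sigma^2-\mu_{2\mathbb Z_+})/4$ by a Fubini/parity argument and then average against the stationary distribution. The only cosmetic difference is that the paper evaluates $A$ and $B$ directly through floor-function identities, whereas you pass through $A+B$ and $A-B$; this is a harmless variation.
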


\proof Note first that
 
\begin{eqnarray*}
c_{\mathrm{\mathrm{geo}}}( \mu)
&=& \frac{\left( \sum_{i \geq 0} i\overline \mu_{2i+1}+ \sum_{i \geq 0} i\overline \mu_{2i}   \right)\mu _{2\mathbb Z_+}+\left( \sum_{i \geq 0} i\overline \mu_{2i+1}+ \sum_{i \geq 0} i\overline \mu_{2i+2}\right)\mu_{2 \mathbb N}}{{\mu _{2\mathbb Z_+}+\mu_{2 \mathbb N}}}
\end{eqnarray*}
and then that 
\begin{equation*}
\label{Oddsum}
\sum_{i \geq 0} i\overline \mu_{2i+1} = \sum_{k \geq 1} \mu_k \sum_{i=0}^{[(k-1)/2]}i=\frac{1}{2}\sum_{k \geq 1} \mu_k \left[\frac{k-1}{2}\right] \left[\frac{k+1}{2}\right],
\end{equation*}
where $[r]$ denotes the largest integer smaller than $r \in \R$. Similarly, $$\sum_{i \geq 0} i\overline \mu_{2i} =\frac{1}{2}\sum_{k \geq 1} \mu_k \left[\frac{k}{2}\right] \left[\frac{k}{2}+1\right]$$ and since $\left[(k-1)/2\right] \left[(k+1)/2\right]  +\left[k/2\right] \left[k/2+1\right]$ is equal to $k^2/2$ when $k$ is even and $(k^2-1)/2$ when $k$ is odd, we finally get
$$
\sum_{i \geq 0} i\overline \mu_{2i+1}+ \sum_{i \geq 0} i\overline \mu_{2i}  =\frac{1}{4}\left(\sum_{k \geq 1} k^2 \mu_k-\mu_{2\mathbb N+1}\right)=\frac{\sigma^2+1-\mu_{2 \mathbb N+1}}{4}=\frac{\sigma^2+\mu_{2 \mathbb Z_+}}{4}.
$$
Similarly (recall that $\mu_1=0$),
$$
\sum_{i \geq 0} i\overline \mu_{2i+1}+ \sum_{i \geq 0} i\overline \mu_{2i+2}  =\frac{1}{4}\left(\sum_{k \geq 1} (k^2-2k) \mu_k+\mu_{2\mathbb N+1}\right)=\frac{\sigma^2-1+\mu_{2 \mathbb N+1}}{4}=\frac{\sigma^2-\mu_{2 \mathbb Z_+}}{4},
$$
which leads to the desired expression for
$
c_{\mathrm{\mathrm{geo}}}( \mu)
$.
\endproof

\bigskip

The strong law of large numbers applied to the Markov chain $(X_n,P_n)$ hence implies that $ n^{-1}S_{n} $ converges  to  $c_{\mathrm{\mathrm{geo}}}( \mu)$ almost surely as $n \to \infty$. 
For the proof of Theorem \ref {thm:main}, we will need an estimate of the speed of the latter convergence. To this end,  we establish the following large deviations result.

\begin{proposition}\label{prop:dev}For every $\epsilon>0$, there exist a constant $B( \epsilon)>0$ and an integer $n_{\epsilon}$ such that, for all $n \geq n_{\epsilon}$, 
\begin{equation}
\label{eq:dev}
\mathbb P \left( \left|\frac{S_n}{n} - c_{ \mathrm{\mathrm{geo}}}( \mu) \right| \geq \epsilon \right) \leq  \exp(-B( \epsilon) \cdot n).
\end{equation}
\end{proposition}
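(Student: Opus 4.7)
The plan is to establish \eqref{eq:dev} by the classical Chernoff argument combined with a Perron--Frobenius analysis of the tilted transition kernel of the Markov additive process $(S_n, P_n)$. This is a standard strategy for additive functionals of irreducible finite-state Markov chains; the main point is to ensure that all tilted quantities are finite and analytic near $\theta = 0$, which is guaranteed by the exponential moment hypothesis on $\mu$.

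First I would dispose of the degenerate case. If $\mu_{2\mathbb N} = 0$, then $\mathbb P(P_{n+1} = \mathrm U \mid P_n = \mathrm D) = 0$, so (since $P_0 = \mathrm L$, which we identify with $\mathrm D$) $P_n \equiv \mathrm D$ almost surely and the $X_n$ are i.i.d.\ with law $\mathbb P(X_1 \in \cdot \mid P_0 = \mathrm D)$; the tails $\bar\mu_{2i+1}$ decay exponentially by hypothesis, so this common law has a finite moment generating function in a neighbourhood of $0$ and \eqref{eq:dev} follows directly from Cram\'er's theorem. So assume $\mu_{2\mathbb N} > 0$, in which case the driving chain is irreducible and aperiodic on $\{\mathrm D, \mathrm U\}$ by Lemma \ref{LemmaDriving}.

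Define the $2 \times 2$ tilted kernel
$$M(\theta)_{p,p'} \ = \ \mathbb E\bigl[e^{\theta X_1}\, \mathbbm{1}_{\{P_1 = p'\}} \bigm| P_0 = p\bigr], \qquad p, p' \in \{\mathrm D, \mathrm U\}.$$
Inspecting the explicit transitions of Section \ref{sec:mc}, each entry is a sum of terms of the form $\bar\mu_{2i+1}$, $\bar\mu_{2i+2}$ or $\mu_{2i}$ weighted by $e^{\theta i}$, so the assumption $\sum_i e^{\lambda i} \mu_i < \infty$ ensures that $M(\theta)$ is finite and analytic in $\theta$ on some interval $(-\theta_0, \theta_0)$. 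Since $\mu_0, \mu_{2\mathbb Z_+}, \mu_{2\mathbb N} > 0$ in the non-degenerate case, $M(\theta)$ has strictly positive entries, so the Perron--Frobenius theorem together with Kato's analytic perturbation theory provide a simple dominant eigenvalue $\lambda(\theta)$ that is analytic in $\theta$ near $0$. A straightforward iteration gives $\mathbb E[e^{\theta S_n}] \leq C(\theta) \lambda(\theta)^n$, and the usual manipulations yield $\lambda(0) = 1$ (since $M(0)$ is the stochastic matrix of $(P_n)$) and $\lambda'(0) = \mathbb E_\pi[X_1] = c_{\mathrm{geo}}(\mu)$, obtained by differentiating the eigenrelation at $\theta = 0$ and pairing with the stationary left eigenvector $\pi$ and the constant right eigenvector $\mathbf 1$.

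The Chernoff bound then gives, for any $\theta \in (0, \theta_0)$,
$$\mathbb P\bigl(S_n \geq n(c_{\mathrm{geo}}(\mu) + \epsilon)\bigr) \ \leq \ C(\theta) \exp\Bigl(n \bigl[\log \lambda(\theta) - \theta(c_{\mathrm{geo}}(\mu) + \epsilon)\bigr]\Bigr).$$
Since $\log \lambda(\theta) = c_{\mathrm{geo}}(\mu)\theta + O(\theta^2)$ near $0$, choosing $\theta > 0$ small enough makes the exponent strictly negative, so the upper tail decays as $e^{-B^+(\epsilon) n}$; the lower tail $\mathbb P(S_n \leq n(c_{\mathrm{geo}}(\mu) - \epsilon))$ is treated symmetrically with $\theta < 0$, and a union bound yields \eqref{eq:dev} with $B(\epsilon) = \tfrac{1}{2}\min(B^+(\epsilon), B^-(\epsilon))$, absorbing multiplicative constants into a sufficiently large $n_\epsilon$. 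The main technical hurdle is verifying the analyticity of $\lambda(\theta)$ near $0$ and identifying $\lambda'(0)$ with $c_{\mathrm{geo}}(\mu)$; both rest on the exponential moment hypothesis on $\mu$ together with routine Perron--Frobenius perturbation theory.
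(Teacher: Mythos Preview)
Your argument is correct and takes a genuinely different route from the paper. The paper invokes a general large-deviations result for Markov additive processes (Theorem~5.1 of Iscoe--Ney--Nummelin \cite{INN85}) and then verifies that the process $(S_n,P_n)$ satisfies the three hypotheses of that theorem: irreducibility and aperiodicity of the driving chain, a two-step uniform recurrence condition \eqref{recurrence} with respect to an explicit measure $\nu$, and exponential integrability of $\nu$. Your approach instead carries out the Chernoff/Perron--Frobenius analysis directly on the $2\times 2$ tilted kernel $M(\theta)$, which is more elementary and self-contained; the state space is so small that the spectral machinery is essentially explicit, and one avoids the black-box citation at the cost of a few extra lines.

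One small imprecision: the claim that $M(\theta)$ has strictly positive entries fails when $\mu_{2\mathbb N+1}=0$, since then $M(\theta)_{\mathrm U,\mathrm U}=\sum_{i\ge 0} e^{\theta i}\mu_{2i+1}=0$. This does not damage the argument, because the matrix is still primitive (one has $M(\theta)_{\mathrm D,\mathrm D}>0$ and both off-diagonal entries positive, so $M(\theta)^2$ has all entries positive), and Perron--Frobenius applies to primitive nonnegative matrices just as well. It is worth noting that the paper also has to single out the case $\mu_{2\mathbb N+1}=0$ when checking its recurrence condition, so this dichotomy is intrinsic to the problem rather than an artefact of your method.
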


\begin{proof}
Recall that $\sum_{i\geq 0} e^{\lambda i}\mu_i <\infty$ for a certain $\lambda>0$.  When $\mu_{2 \mathbb N}=0$,  $(S_n)$ is a standard random walk (with i.i.d. increments), with step distribution having exponential moments.  The bound (\ref{eq:dev}) is then a standard large deviations result.
To prove a similar result when $\mu_{2 \mathbb N}>0$ (which we now assume), we use Theorem 5.1 of \cite{INN85}. 
According to this theorem, (\ref{eq:dev}) holds as soon as the following three conditions are satisfied:
\begin{enumerate}
\item the driving chain $(P_n)$ is irreducible aperiodic;
\item the chain $(X_n,P_n)$ satisfies the following recurrence condition: there exist $m_0 \geq 1$ and a non-zero measure $\nu$ on $\mathbb Z_+ \times \{\mathrm D, \mathrm U \}$ and constants $a,b \in (0,\infty)$ such that
\begin{equation}
\label{recurrence}
a\nu(i,\mathrm X) \leq \mathbb P\left(X_{n+m_0}=i, P_{n+m_0}=\mathrm X \ | \ P_n=\mathrm Y\right) \leq b \nu(i,\mathrm X)
\end{equation}
for every $i \in \mathbb Z_+$ and $\mathrm X, \mathrm Y \in \{\mathrm D, \mathrm U\}$. 
\item there exists $ \alpha>0$ such that \begin{equation}
\label{eq:sum}\sum_{i\geq 0}\exp(\alpha i) (\nu(i,\mathrm D)+\nu(i,\mathrm U)) <\infty.
\end{equation}
\end{enumerate}
To be completely accurate, Theorem 5.1 of \cite{INN85} actually assumes that the set of all $\alpha>0$ such that \eqref{eq:sum} holds  is open. However, by analyzing the proof, it turns out that this extra condition is only needed to get a lower large deviations bound.

By Lemma \ref{LemmaDriving}, we know that the driving chain is irreducible aperiodic when $\mu_{2 \mathbb N}>0$. To check the second condition, we will need the explicit expression of the two-step transition probabilities:
\begin{eqnarray*}
&&\mathbb P\left(X_{n+2}=i, P_{n+2}=\mathrm D \ | \ P_n=\mathrm D\right)=(\mu_{2 \mathbb N+1}+\mu_0)(\overline \mu_{2i+1} +\overline \mu_{2i+1}\mathbbm{1}_{\{i \geq 1\}} )+ \mu_{2 \mathbb N}2\overline \mu_{2i+2} \\
&&\mathbb P\left(X_{n+2}=i, P_{n+2}=\mathrm U \ | \ P_n=\mathrm D \right)= (\mu_{2 \mathbb N+1}+\mu_0) \mu_{2i}\mathbbm{1}_{\{i \geq 1\}} +\mu_{2 \mathbb N} \mu_{2i+1} \\
&&\mathbb P\left(X_{n+2}=i, P_{n+2}=\mathrm D \ | \ P_n=\mathrm U \right)=\mu_{2 \mathbb Z_+} (\overline \mu_{2i+1} +\overline \mu_{2i+1}\mathbbm{1}_{\{i \geq 1\}} )+ \mu_{2 \mathbb N+1} 2\overline \mu_{2i+2}\\
&& \mathbb P\left(X_{n+2}=i, P_{n+2}=\mathrm U \ | \ P_n=\mathrm U \right)=\mu_{2 \mathbb Z_+}\mu_{2i}\mathbbm{1}_{\{i \geq 1\}}+ \mu_{2 \mathbb N+1} \mu_{2i+1}.
\end{eqnarray*}
This suggests to set
\begin{equation*}
\nu(i,\mathrm D)=\overline \mu_{2i+1} +\overline \mu_{2i+1}\mathbbm{1}_{\{i \geq 1\}}+ 2\overline \mu_{2i+2} \quad \text{and} \quad \nu(i,\mathrm U)=\mu_{2i}\mathbbm{1}_{\{i \geq 1\}}+\mu_{2i+1}.
\end{equation*}
Assuming then that $\mu_{2\mathbb N+1}>0$, it is easy to check that (\ref{recurrence}) is satisfied with the two constants  $a=\min{(\mu_{2 \mathbb N},\mu_{2 \mathbb N+1})}$ and $b=1$
(and $m_0=2$).
Next, if $\mu_{2\mathbb N+1}=0$, notice that $\overline \mu_{2i+1}=\overline \mu_{2i+2}$ for all $i$, so that\begin{eqnarray*}
\nu(i,\mathrm D)=3\overline \mu_{2i+2} +\overline \mu_{2i+2}\mathbbm{1}_{\{i \geq 1\}} \quad \text{and} \quad
 \nu(i,\mathrm U)=\mu_{2i}\mathbbm{1}_{\{i \geq 1\}}.
\end{eqnarray*}
The inequalities (\ref{recurrence}) thus hold with the constants
$a= \mu_{2 \mathbb N}/3$ and $b=1$
(notice that $\mu_0 \geq 1/2 \geq \mu_{2 \mathbb N}/3$).
Hence, in all cases the second condition is satisfied. Finally, the last condition clearly holds since we have assumed that $\mu$ has exponential moments and since
$$\sum_{i\geq 1}\exp(\alpha i) (\nu(i,\mathrm D)+\nu(i,\mathrm U))=\sum_{i\geq 1}\exp(\alpha i) \left(2 \overline \mu_{2i+1}+2 \overline \mu_{2i+2} + \mu_{2i}+\mu_{2i+1}\right). \qedhere$$
\end{proof}

\section{Convergence towards the Brownian CRT} \label{sec:proof}

\subsection {The Gromov--Hausdorff topology}
\label{sec:GH}
 We start by recalling the definition of the Gromov--Hausdorff topology (see \cite{BBI01, Eva08} for additional details). If $(E,d)$ and $(E',d')$ are two  compact  metric spaces, the Gromov--Hausdorff distance between
${E}$ and ${E'}$  is defined by
 \begin{eqnarray*} \op{d_{GH}}({E},{E'}) &=& \inf \left\{\op{d}_{\op{H}}^F(\phi(E),\phi'(E'))\right\}, \end{eqnarray*} where the infimum is taken over all choices of metric spaces $(F,\delta)$ and  isometric embeddings $\phi : E \to F$ and $\phi'  : E' \to F$ of $E$ and $E'$ into $F$, and where $ \mathrm{d}_{ \mathrm{H}}^F$ is the Hausdorff distance between compacts sets in $F$. The  Gromov--Hausdorff distance is indeed a metric on the space of all isometry classes of compact metric spaces, which makes it separable and complete.
 
 An alternative practical definition of  $\op{d_{GH}}$ uses \textit{correspondences.} A correspondence between two  metric spaces $(E,d)$ and $(E',d')$ is by definition a subset $\mathcal{R} \subset E\times E'$ such that, for every $x_{1} \in E$, there exists at least one point $x_{2}\in E'$ such that $(x_{1},x_{2}) \in \mathcal{R}$ and conversely, for every $y_{2}\in E'$, there exists at least one point $y_{1}\in E$ such that $(y_{1},y_{2}) \in \mathcal{R}$. The distortion of the correspondence $\mathcal{R}$ is defined by 
 $$ \op{dis}(\mathcal{R}) = \sup\big\{|d(x_{1},y_{1})-d'(x_{2},y_{2})| : (x_{1},x_{2}),(y_{1},y_{2}) \in \mathcal{R} \big\}.$$ The Gromov--Hausdorff distance can then be expressed in terms of correspondences by the formula
 \begin{equation}
 \label{GHcorres}
 \op{d_{GH}}({E},{E'})= \frac{1}{2} \inf_{\mathcal{R} \subset E\times E'} \big\{\hspace{-0.5mm}\op{dis}(\mathcal{R})\big\},
 \end{equation} where the infimum is over all correspondences $\mathcal{R}$ between $(E,d)$ and $(E',d')$.

\subsection {Proof of Theorem \ref{thm:main}}
\label {sec:proof}

We first need to introduce some notation.   Let $ \tau \neq \{\varnothing\}$ be a finite tree such that no vertex has a unique child. Recall that $\tau^\bullet$ is the planted tree obtained from $ \tau$ by attaching an additional leaf  at the root and 
 denote  by $ \mathrm{d}_{\tau^\bullet}(u,v)$ the graph distance between $u, v \in \tau^\bullet$. From Section \ref{sec:geoexplo}, recall also that $ \ell_{0}, \ldots ,\ell_{\lambda(\tau)}$ are the leaves (in clockwise order) of $\tau^\bullet$. If $u,v$ are leaves of $\tau^\bullet$, let $0 \leq  p,q \leq \lambda( \tau)$ be such that $ u = \ell_{p}$ and $v= \ell_{q}$. Then denote by $\mathcal{D}= \phi^{-1}( \tau)$ the random dissection associated with $ \tau$ by duality (see Section \ref {sec:duality}). With a slight abuse of notation, we let $ \mathrm{d}_{ \mathcal{D}}(u,v)$ be the distance between $\overline{p}$ and $ \overline{q}$ in $ \mathcal{D}$.
 
 We say that a sequence of positive numbers $(x_n)_{n \geq 0}$ is $oe(n)$ if there exist  constants $ a, c, C >0$ such that $x_n \leq
C e^{-cn^{a}}$ for every $n \geq 0$, and we write
$x_n=oe(n)$.  Finally, fix $ \epsilon>0$ and  set $$ \epsilon_{n}( \tau^\bullet)=\epsilon  \max( \mathsf{Diam}( \tau^\bullet), \sqrt{n}).$$

\begin{lemma} \label{lem:controle}We have $$\GWmu{ \exists \, u,v \textrm{ leaves in } \tau^\bullet, \, \big|  \mathrm{d}_{ \mathcal{D}}(  u,  v)-c_{\mathrm{\mathrm{geo}}}( \mu)    \mathrm{d}_{\tau^\bullet}( u, v)\big| \geq  \epsilon_{n}( \tau^\bullet) \,\Big|\, \lambda( \tau^\bullet)=n}=oe(n).$$
\end{lemma}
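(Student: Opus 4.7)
The plan is to combine the geodesic exploration algorithm $\mathsf{Geod}$ of Section~\ref{sec:geoexplo} with the large deviations estimate of Proposition~\ref{prop:dev}, via a first-moment argument over pairs of leaves. By the local limit theorem for $\GW_{\mu}$ trees conditioned on their number of leaves \cite{Kor12}, $\GW_\mu(\lambda(\tau^\bullet) = n) \geq c n^{-3/2}$ for all $n$ large enough, so it is enough to bound the unconditioned $\GW_\mu$-expected number of bad pairs by $oe(n)$. Since $\mathsf{Diam}(\tau^\bullet) \geq d_{\tau^\bullet}(u,v)$ for any pair $(u,v)$ of leaves, the random threshold $\epsilon_n(\tau^\bullet)$ dominates the deterministic quantity $\epsilon \max(d_{\tau^\bullet}(u,v),\sqrt{n})$, which fully decouples the bad event from the diameter.

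For a fixed pair of leaves $(u,v)$ at tree distance $h$ in $\tau^\bullet$, re-rooting the dissection at the polygon edge adjacent to $u$ (so that $u$ plays the role of $\overline{0}$) and applying the algorithm $\mathsf{Geod}$ gives, by \eqref{eq:procheun}, $|d_{\mathcal{D}}(u,v) - s_{h-1}| \leq 1$. Here $s_{h-1}$ is the sum of the local increments defined by the recurrence following Proposition~\ref{prop:geod}, each a deterministic function of the offspring counts of the $(i+1)$-th vertex of $\llbracket u,v \rrbracket$ on the left and right of the path. The bad event for the pair therefore reduces to $|s_{h-1} - c_{\mathrm{geo}}(\mu)\, h| \geq \epsilon\max(h,\sqrt{n})$.

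To take the expectation over the random tree, I would use a spinal decomposition adapted to pairs of leaves: splitting the path $\llbracket u, v\rrbracket$ at the MRCA of $u$ and $v$ and invoking the branching property, the subtrees hanging off the two half-paths are independent $\GW_\mu$-distributed. Summing over pairs at prescribed heights then relates the expected number of bad pairs to an expectation along two size-biased spines glued at a common root, in complete analogy with the one-spine formula of Proposition~\ref{prop:biais}. Consequently the joint law of the increments along the path matches (up to the harmless exchange of the labels $\mathrm L, \mathrm R$ when crossing the MRCA) that of the Markov chain $(X_i, P_i)$ of Section~\ref{sec:mc}. Proposition~\ref{prop:dev} directly handles the regime $h \gtrsim \sqrt n$, while for $h \ll \sqrt n$ a direct Chernoff bound using the exponential moment assumption on $\mu$ takes over, yielding uniformly in $h \geq 1$
$$\Pr{|s_{h-1}-c_{\mathrm{geo}}(\mu)\, h| \geq \epsilon\max(h,\sqrt{n})} \leq C\exp\bigl(-B(\epsilon)\sqrt n\bigr).$$

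A union bound over the at most $(n+1)^2$ pairs of leaves then shows that the unconditioned expected number of bad pairs is $oe(n)$, and multiplying by the polynomial $n^{3/2}$ coming from the removal of the conditioning completes the proof. The main obstacle lies in justifying the two-spine analogue of Proposition~\ref{prop:biais}: while the independence of the subtrees hanging off each half-path is immediate from the branching property, one must carefully verify that the transition from one half-path to the other at the MRCA preserves the driving chain $(P_i)$ of Lemma~\ref{LemmaDriving}, so that the time-average of the increments of $s$ is indeed the constant $c_{\mathrm{geo}}(\mu)$ identified in Section~\ref{sec:mc}.
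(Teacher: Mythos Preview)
Your strategy diverges from the paper's in one crucial respect: you attempt a direct two-spine decomposition for pairs of leaves, whereas the paper reduces the pair problem to the root-to-leaf problem via the \emph{rotational invariance} of Boltzmann dissections. Concretely, the paper first proves the root-to-leaf estimate \eqref{eq:u1} using only the one-spine formula of Proposition~\ref{prop:biais} (this part matches your plan). It then observes that under $\GW_\mu(\,\cdot\mid \lambda=n)$, if $\mathcal L,\mathcal L'$ are independent uniform leaves, re-rooting the planted tree at $\mathcal L$ preserves the law (equation~\eqref{eq:reroot}), so the bad-pair probability for $(\mathcal L,\mathcal L')$ equals the already-controlled bad probability for $(\varnothing,\mathcal L)$. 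Finally, instead of a union bound, the paper samples $n^3$ independent uniform pairs and uses a coupon-collector estimate to cover every pair of leaves with probability $1-oe(n)$.

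This sidesteps exactly the obstacle you flag: because the re-rooted planted tree has the same law as the original, the increments along $\llbracket \mathcal L,\mathcal L'\rrbracket$ are distributed \emph{exactly} as in the one-spine Markov chain of Section~\ref{sec:mc}, with no MRCA transition to analyze. Your two-spine route is in principle viable (the anomalous contribution at the MRCA is a single bounded increment), but as you note, checking that the size-biasings along the two half-spines and at the branch point reproduce the law of $(G_i,D_i)$ is delicate, and the exchange of $\mathrm L/\mathrm R$ labels at the MRCA is not obviously ``harmless'' for the driving chain.

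There is also a slip in your final paragraph: the bound ``at most $(n+1)^2$ pairs of leaves'' holds only conditionally on $\lambda(\tau^\bullet)=n$, yet you invoke it to control an \emph{unconditioned} first moment. Under $\GW_\mu$ the number of leaves is unbounded, so a genuine first-moment bound would require summing the two-spine mass over all path lengths $h$, not multiplying by $(n+1)^2$. The paper avoids this by keeping steps~2 and~3 (rotational invariance and coupon collector) entirely conditional.
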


\begin{proof}
Recall the notation $|u|$ for the generation of a vertex $u$ of a tree. We start by comparing the distance between a leaf and the root in the dissection and in the tree and show that
\begin{equation}
\label{eq:u1}
\GWmu{ \exists \, u  \textrm{ leaf in } \tau^\bullet,\, \big|\ddiss(  \varnothing,  u)-c_{\mathrm{\mathrm{geo}}}( \mu)  |u|\big | \geq  \epsilon_{n}( \tau^\bullet) \,\Big| \, \lambda( \tau^\bullet)=n}=oe(n).\end{equation}
For this, we use the notation $H_{\tau^\bullet}(u)$ introduced at the end of Section \ref {sec:geoexplo}. By \eqref{eq:procheun}, we have $|d_{ \mathcal{D}}( \varnothing, u)- H_{\tau^\bullet}(u)| \leq 1$ for every leaf $u \in \tau^\bullet$. In addition, by \cite[Theorem 3.1]{Kor12}, we have $${\GWmu{ \lambda( \tau^\bullet)=n}\quad =\quad \GWmu{ \lambda( \tau)=n-1}  \quad\mathop{ \sim}_{n \rightarrow \infty} \quad  \sqrt{ \frac{ \mu_{0}}{ 2 \pi \sigma^2}} \cdot \frac{1}{n^{3/2}},}$$
so  that $oe(n)/\GWmu{ \lambda( \tau^\bullet)=n}= oe(n)$.  Thus \eqref{eq:u1} will follow if we can show that
\begin{equation} \label{eq:u11}
\GWmu{ \exists \, u \in \tau^\bullet , \, \big |H_{\tau^\bullet}( {u})-c_{\mathrm{\mathrm{geo}}}( \mu)  |u| \big | \geq  \epsilon_{n}( \tau^\bullet)}=oe(n).
\end{equation}
To this end, we bound from above the left-hand side of \eqref{eq:u11} by 
\begin{eqnarray*}
&& \Esmu { \sum_{u \in \tau^\bullet}  \mathbbm{1}\left\{ \big| {H_{ \tau^\bullet}(u)}{} - c_{\mathrm{\mathrm{geo}}}( \mu) |u|\big| \geq   \epsilon_{n}( \tau^\bullet)\right\}}  \\
   &&   \qquad\qquad \qquad \qquad\qquad  \qquad = \sum_{j=1}^{ \infty} \Esmu { \sum_{u \in \tau^\bullet, |u| =j}  \mathbbm{1}\left\{ \left| \frac{H_{ \tau^\bullet}({u})}{j} - c_{\mathrm{\mathrm{geo}}}( \mu) \right|   \geq \frac{\epsilon_{n}( \tau^\bullet)}{j}\right\}} \\
   &&   \qquad\qquad \qquad \qquad\qquad  \qquad =\sum_{j=1}^{ \infty} \Esmu { \sum_{u \in \tau^\bullet, |u| =j}  \mathbbm{1}\left\{ \left| \frac{H_{ [\tau^\bullet]_{j}}({u})}{j} - c_{\mathrm{\mathrm{geo}}}( \mu) \right|   \geq \frac{\epsilon_{n}( \tau^\bullet)}{j}\right\}} \\
      &&   \qquad\qquad \qquad \qquad\qquad  \qquad  \leq \sum_{j=1}^{ \infty} \Esmu { \sum_{u \in \tau^\bullet, |u| =j}  \mathbbm{1}\left\{ \left| \frac{H_{ [\tau^\bullet]_{j}}({u})}{j} - c_{\mathrm{\mathrm{geo}}}( \mu) \right|   \geq \frac{ \epsilon \max(j,\sqrt{n})}{j}\right\}}.
\end{eqnarray*}
For the last inequality, we have used the fact that if there exists  $ u \in \tau^\bullet$ with $ |u|=j$ then $ \mathsf{Diam}( \tau^\bullet) \geq j$. Hence, using Proposition \ref{prop:biais} and then \eqref{eq:snh},  we get
  \begin{eqnarray*}
\GWmu{ \exists \, u \in \tau^\bullet ; \, \big |H_{ \tau^\bullet}(u)-c_{\mathrm{\mathrm{geo}}}( \mu)  |u| \big | \geq  \epsilon_{n}( \tau^\bullet)} &\leq&
 \sum_{j=1}^{ \infty}  {  \Pr{ \left| \frac{H_{[ T_{ \infty}^\bullet]_{j}}({W_{j}})}{j} - c_{\mathrm{\mathrm{geo}}}( \mu) \right|   \geq\frac{\epsilon \max( j, \sqrt{n})}{j}}}   \\
   & =  &  \sum_{j=1}^{ \infty} \Pr { \left|\frac{S_{j-1}}{j} - c_{\mathrm{\mathrm{geo}}}( \mu) \right| \geq    \frac{\epsilon \max( j, \sqrt{n})}{j}}.
\end {eqnarray*}
Now,  suppose that $n> n_{ \epsilon}^{4}$, so that Proposition \ref{prop:dev} can be applied:
   \begin{eqnarray*}
   \sum_{j=n^{1/4}}^{ \infty} \Pr { \left|\frac{S_{j-1}}{j} - c_{\mathrm{\mathrm{geo}}}( \mu) \right| \geq    \frac{\epsilon \max( j, \sqrt{n})}{j}} & \leq & \sum_{j=n^{1/4}}^{ \infty} \Pr { \left|\frac{S_{j-1}}{j} - c_{\mathrm{\mathrm{geo}}}( \mu) \right| \geq   \epsilon} =oe(n).
      \end{eqnarray*}
Assume in addition that $n$ is sufficiently large so that $  \epsilon n^{1/4}> c_{ \textrm{geo}}( \mu)$. In order to bound the remaining terms corresponding to $1 \leq j \leq  n^ {1/4}$, note that if    $|{S_{j-1}}/{j} - c_{\mathrm{\mathrm{geo}}}( \mu) | \geq  \epsilon n^{1/4}$ for some $1 \leq  j \leq n^{1/4}$, then necessarily there exists $0 \leq i \leq n^{1/4}$ such that $S_{i+1}-S_{i}> \epsilon n^{1/4}$.  Then note from Section \ref{sec:makov}, with the notation introduced there, that $S_{i+1}-S_{i} \leq 1 + \max (G_{i},D_{i})$. Since the variables $(G_{i},D_{i})_{i \geq 1}$ are i.i.d.\,\,with exponential moments, by combining an exponential Markov inequality with a union bound      
     we easily get that for every $j \leq n^{1/4}$, $\Pr { |{S_{j-1}}/{j} - c_{\mathrm{\mathrm{geo}}}( \mu) | \geq    {\epsilon \max( j, \sqrt{n})}/{j}}=oe(n)$. Therefore
      $$
   \sum_{j=1}^{n^{1/4}} \Pr { \left|\frac{S_{j-1}}{j} - c_{\mathrm{\mathrm{geo}}}( \mu) \right| \geq    \frac{\epsilon \max( j, \sqrt{n})}{j}}  =oe(n),$$  which establishes \eqref{eq:u11} and hence \eqref{eq:u1}.

To conclude, we use the rotational invariance of Boltzmann dissections. 
Conditionally on $ \tau^\bullet$, let $ \mathcal{L}$ and $ \mathcal{L'}$ be two leaves chosen independently and uniformly at random from $  \tau^\bullet$. 
Then, under  $ \GW_{ \mu}$,
\begin{equation}
\label{eq:reroot}( \ddiss( \mathcal{L}, \mathcal{L'}),  \mathrm{d}_{\tau^\bullet}( \mathcal{L}, \mathcal{L'}),  \tau^{\bullet,[\mathcal L]}) \quad  \quad\mathop{=}^{(d)} \quad (\ddiss(  \varnothing  , \mathcal{L}), \mathrm{d}_{\tau^\bullet}( \varnothing, \mathcal{L}), \tau^\bullet),
\end{equation}
where $ \tau^{\bullet,[\mathcal L]}$ denotes the planted tree $\tau^{\bullet}$ re-rooted at $\mathcal L$.
Note that it is crucial here to work with the planted version of trees.
Hence, by \eqref{eq:u1}, we get that
\begin{equation}
\label{eq:uu}\GWmu{\big|  \ddiss( \mathcal{L}, \mathcal{L'})-c_{\mathrm{\mathrm{geo}}}( \mu)   \mathrm{d}_{\tau^\bullet}( \mathcal{L}, \mathcal{L'})\big| \geq \epsilon_{n}( \tau^\bullet) \,\Big|\, \lambda( \tau^\bullet)=n}  =oe(n).
\end{equation}
Now, conditionally on $ \tau^\bullet$,  let $( \mathcal{L}_{j}, \mathcal{L}'_{j})_{ 1 \leq j \leq   \lambda(\tau^\bullet)^3}$ be a sequence of i.i.d. couples of  independent uniform leaves. Conditionally on $\lambda( \tau^\bullet)=n$, the probability that there exists $1 \leq j \leq n^{3}$  such that  $|  \ddiss( \mathcal{L}_{j}, \mathcal{L}'_{j})-c_{\mathrm{\mathrm{geo}}}( \mu)   \mathrm{d}_{\tau^\bullet}( \mathcal{L}_{j}, \mathcal{L'}_{j})| \geq \epsilon_{n}( \tau^\bullet)$ is smaller than $n^3 oe(n)=oe(n)$ by \eqref{eq:uu}. On the other hand, conditionally on $ \lambda(\tau^\bullet)=n$, 
the probability that there exists a couple of leaves of $\tau^{\bullet}$ which does not belong to $( \mathcal{L}_{j}, \mathcal{L}'_{j})_{ 1 \leq j \leq   n^3}$ is smaller than $ n^2 \left( 1-{n^  {-2}}\right)^{n^3}=oe(n)$. Hence
$$\GWmu{ \exists \, u,v \textrm{ leaves in } \tau^\bullet, \, \big|  \mathrm{d}_{ \mathcal{D}}(  u,  v)-c_{\mathrm{\mathrm{geo}}}( \mu)    \mathrm{d}_{\tau^\bullet}( u, v)\big| \geq  \epsilon_{n}( \tau^\bullet) \,\Big|\, \lambda( \tau^\bullet)=n} \leq oe(n)+oe(n). \qedhere$$
%This completes the proof.
\end{proof}

The next proposition will lead to an effortless proof of Theorem \ref{thm:main}, as well as  interesting applications to the convergence of moments. If $ \tau$ is a finite tree, we denote by $ \tau^{ \ell}$  the graph formed by the leaves of $ \tau$ equipped with the graph distance of $\tau$. Recall that $  \mathcal{D}^\mu_{n} $ denotes
a random dissection of $ \mathcal{P}_{n}$ distributed according to $ \mathbb{P} ^ {\mu}_ {n}$ and let $ \mathcal{T}_{n}= \phi(  \mathcal{D}^\mu_{n}) ^{ \bullet}$ be its dual planted tree. By Proposition \ref {prop:GW}, $ \mathcal{T}_{n}$ has the same distribution as  the planted version of a $  \GW_{ \mu}$ tree conditioned on having $n-1$ leaves. Finally, recall the notation $ \epsilon_{n}( \cdot)$ introduced just before Lemma \ref{lem:controle}.

\begin{proposition}We have:
\label{prop:utile}
\begin{enumerate}
 \item[(i)] $ \displaystyle \mathrm{d_{GH}} \left( { \mathcal{T}^{ \ell}_{n}}{ },  { \mathcal{T}_{n}}{ }\right) \leq  \frac{ \ln(n)}{ \ln(2)}$,
 \item[(ii)] $ \displaystyle \Pr{ \mathrm{d_{GH}} \left(\mathcal{D}^\mu_{n},c_{\mathrm{geo}}( \mu) \cdot { \mathcal{T}^{ \ell}_{n}} \right) \geq  \epsilon_{n}( \mathcal{T}_{n})}  =oe(n)$.
  \end{enumerate}
\end{proposition}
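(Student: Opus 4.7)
The plan is to treat the two parts separately: statement (i) is a deterministic consequence of the structural rigidity of the dual tree $\mathcal{T}_n$, while statement (ii) is essentially a repackaging of Lemma \ref{lem:controle} through the natural correspondence between vertices of $\mathcal{D}_n^\mu$ and leaves of $\mathcal{T}_n$ induced by the bijection $\phi$.

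For (i), the key observation is that every non-leaf vertex of $\mathcal{T}_n = \phi(\mathcal{D}_n^\mu)^\bullet$ has at least two children. Indeed, $\phi(\mathcal{D}_n^\mu)$ has no vertex with exactly one child (by the defining property of the range of $\phi$), and the planting operation only adds one new vertex as the root of $\mathcal{T}_n$, which is itself a leaf, while leaving all other child counts unchanged. I will then show by induction on the tree structure that for every $v \in \mathcal{T}_n$,
$$L(v) \geq 2^{h(v)},$$
where $L(v)$ denotes the number of leaves in the subtree of $\mathcal{T}_n$ rooted at $v$ and $h(v)$ the minimum distance from $v$ to such a leaf. The base case $v$ a leaf is trivial, and for $v$ internal with $k \geq 2$ children $c_1, \ldots, c_k$, one has $h(c_i) \geq h(v) - 1$ for all $i$, so $L(v) = \sum_i L(c_i) \geq k \cdot 2^{h(v)-1} \geq 2^{h(v)}$. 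Since $\mathcal{T}_n$ has $n$ leaves, $h(v) \leq \log_2(n) = \ln(n)/\ln(2)$ for every $v \in \mathcal{T}_n$. Choosing for each $v$ a leaf $\pi(v) \in \mathcal{T}_n^\ell$ at distance at most $\log_2(n)$ (with $\pi(v) = v$ whenever $v$ is already a leaf), the correspondence $\mathcal{R} = \{(v, \pi(v)) : v \in \mathcal{T}_n\}$ has distortion at most $2 \log_2(n)$ by the triangle inequality, and the correspondence formulation \eqref{GHcorres} of the Gromov--Hausdorff distance yields (i).

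For (ii), recall from Section \ref{sec:geoexplo} that $\ell_0, \ldots, \ell_{n-1}$ are the leaves of $\mathcal{T}_n$ in clockwise order, and that the convention of Lemma \ref{lem:controle} identifies $\ell_k$ with the vertex $\overline{k}$ of the polygon when computing distances in $\mathcal{D}_n^\mu$. Consider then the natural correspondence
$$\mathcal{R}_{\mathrm{nat}} := \{(\overline{k}, \ell_k) : 0 \leq k \leq n-1\}$$
between $\mathcal{D}_n^\mu$ and $c_{\mathrm{geo}}(\mu) \cdot \mathcal{T}_n^\ell$. Its distortion equals
$$\operatorname{dis}(\mathcal{R}_{\mathrm{nat}}) = \sup_{0 \leq k, k' \leq n-1} \left| \mathrm{d}_{\mathcal{D}_n^\mu}(\overline{k}, \overline{k'}) - c_{\mathrm{geo}}(\mu)\, \mathrm{d}_{\mathcal{T}_n}(\ell_k, \ell_{k'}) \right|,$$
which is precisely the quantity bounded in Lemma \ref{lem:controle}. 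Since $\mathcal{T}_n$ is distributed as the planted version of a $\mathsf{GW}_\mu$ tree conditioned on $\lambda(\tau^\bullet) = n$ (by Proposition \ref{prop:GW}), that lemma gives $\operatorname{dis}(\mathcal{R}_{\mathrm{nat}}) \leq \epsilon_n(\mathcal{T}_n)$ except on an event of probability $oe(n)$, and \eqref{GHcorres} concludes.

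No serious obstacle is expected: all the probabilistic content is already packaged in Lemma \ref{lem:controle}, and the whole argument amounts to converting that pointwise distance estimate into a Gromov--Hausdorff estimate via two well-chosen correspondences. The only point requiring some care is to check that the planting of the root does not spoil the induction in (i); this is harmless because the planted root is itself a leaf (degree one), so the base case covers it, while all other vertices inherit their children from $\phi(\mathcal{D}_n^\mu)$ and thus have either zero or at least two children.
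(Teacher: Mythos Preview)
Your proof is correct and follows essentially the same approach as the paper: part (i) via the observation that every vertex lies within $\log_2 n$ of a leaf (you count leaves in subtrees, the paper counts vertices at a given generation, but the binary-branching argument is identical), and part (ii) via the natural correspondence $\{(\overline{k},\ell_k)\}$ combined with Lemma~\ref{lem:controle}. Your handling of the planted root in (i) is a nice explicit check that the paper leaves implicit.
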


\begin{proof}
The first assertion comes from the following deterministic observation: if $  \tau_{n}$ is a tree with $n$ vertices such that no vertex has a unique child, then
\begin{equation}
\label{eq:borne} \mathrm{ d_{GH}} \left( { \tau^{ \ell}_{n}}{ },  {\tau_{n}}{ }\right) \leq  \frac{ \ln(n)}{ \ln(2)}.
\end{equation}
Indeed, for $u \in \tau_{n}$, denote by $u^ \ell$ a leaf with lowest generation among the descendants of $u$. If $ | u^ \ell| \geq k+|u|$, then there are at least $2^k$ vertices among the $k$-th generation descending from $u$. Hence $ 2^k \leq n$, so that $ k \leq  \ln(n)/ \ln(2)$. As a consequence, every vertex of $ \tau_{n}$ has a leaf at distance at most $ \ln(n)/ \ln(2)$ and \eqref{eq:borne} follows.

The second assertion is an immediate consequence of Lemma \ref{lem:controle} by considering the trivial correspondence $\{(\overline k, \ell_{ k}), 0 \leq k \leq n-1\} \subset \mathcal{D}^\mu_{n} \times  (c_{\mathrm{geo}}(\mu) \cdot \mathcal{T}^{ \ell}_{n})$. 
\end{proof}

\begin{proof}[Proof of Theorem \ref{thm:main}]
By \cite{Riz11,Kor12}, $\mathcal{T}_{n}/ \sqrt{n}$ converges in distribution for the Gromov--Hausdorff topology towards $c_{ \mathrm{tree}}(\mu)   \cdot  \mathcal{T}_{ \mathbf{e}}$ as $n \rightarrow \infty$. Hence, by Proposition \ref{prop:utile} (i), $\mathcal{T}^{ \ell}_{n}/ \sqrt{n}$ converges in distribution towards $c_{ \mathrm{tree}}(\mu)   \cdot  \mathcal{T}_{ \mathbf{e}}$. It is thus sufficient to establish that
\begin{equation}
\label{eq:p1} \mathrm{d_{GH}} \left(\frac{ \mathcal{D}^\mu_{n}}{ \sqrt{n}},c_{\mathrm{\mathrm{geo}}}( \mu)\frac{ \mathcal{T}^{ \ell}_{n}}{ \sqrt{n}} \right)    \qquad\mathop{\longrightarrow}_{n \rightarrow \infty}^{ ( \P)} \qquad 0.
\end{equation}
From Proposition \ref{prop:utile} (ii),
$$ {\frac{\sqrt{n}}{ \max(\sqrt{n}, \mathsf{Diam}( \mathcal{T}_n))}  \  \mathrm{d_{GH}} \left(\frac{ \mathcal{D}^\mu_{n}}{ \sqrt{n}},c_{\mathrm{\mathrm{geo}}}( \mu)\frac{ \mathcal{T}^{ \ell}_{n}}{ \sqrt{n}} \right) }\quad\mathop{\longrightarrow}_{n \rightarrow \infty}^{( \P)} \quad 0.$$
Moreover, since $\mathcal{T}_{n}/ \sqrt{n}$ converges in distribution towards $c_{ \mathrm{tree}}(\mu)   \cdot  \mathcal{T}_{ \mathbf{e}}$,  the random variable \linebreak $ \max(\sqrt{n}, \mathsf{Diam}( \mathcal{T}_n))/\sqrt{n} $ converges in distribution  towards an a.s. finite random variable. Convergence \eqref{eq:p1} hence follows, and this completes the proof.\end{proof}

\section{Applications}
\label{sec:appl}

\subsection{Convergence of moments for different statistics}
\label{sec:moments}

The following result strengthens Theorem \ref{thm:main} and  will lead to asymptotic estimates for moments of various statistics of $\mathcal{D}^\mu_{n}$.

\begin{proposition}\label{prop:moments} Let $F$ be a positive continuous  function defined on the set of all (isometry classes of) compact metric spaces, such that$F( \mathcal{M})\leq C \mathsf{Diam}(\mathcal{M})^p$ for all compact metric spaces $\mathcal M$ and fixed $C,p > 0$. Then:
$$ \Es{F \left( \frac{\mathcal{D}^\mu_{n}}{\sqrt{n}}\right)}  \quad\mathop{\longrightarrow}_{n \rightarrow \infty} \quad  \Es{F(c(\mu)\cdot \mathcal{T}_{ \mathbf{e}})}.$$
\end{proposition}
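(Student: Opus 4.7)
The plan is to upgrade the convergence in distribution supplied by Theorem \ref{thm:main} into a convergence of expectations, by establishing uniform integrability. The continuous mapping theorem combined with Theorem \ref{thm:main} gives $F(\mathcal{D}_n^\mu/\sqrt n) \to F(c(\mu)\cdot \mathcal{T}_{\mathbf{e}})$ in distribution, so it suffices to show that the sequence $(F(\mathcal{D}_n^\mu/\sqrt n))_{n \geq 3}$ is uniformly integrable. Since $F(\mathcal{M}) \leq C\, \mathsf{Diam}(\mathcal{M})^p$, this reduces to establishing
$$\sup_{n \geq 3}\, \mathbb{E}\Big[\big(\mathsf{Diam}(\mathcal{D}_n^\mu)/\sqrt n\big)^q\Big] \,<\, \infty \quad \text{for some } q > p.$$

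First I would reduce the estimate to the dual tree $\mathcal{T}_n$. Fix $\epsilon > 0$. By Proposition \ref{prop:utile}(ii), combined with the fact that distances in $\mathcal{T}_n^\ell$ are induced from $\mathcal{T}_n$ (so that $\mathsf{Diam}(\mathcal{T}_n^\ell) \leq \mathsf{Diam}(\mathcal{T}_n)$), on an event $E_n$ of probability $1 - oe(n)$ one has
$$\mathsf{Diam}(\mathcal{D}_n^\mu) \,\leq\, c_{\mathrm{geo}}(\mu)\,\mathsf{Diam}(\mathcal{T}_n^\ell) + 2\,\epsilon_n(\mathcal{T}_n) \,\leq\, C_1\,\mathsf{Diam}(\mathcal{T}_n) + C_2 \sqrt n.$$
On the complement $E_n^c$, the crude deterministic bound $\mathsf{Diam}(\mathcal{D}_n^\mu) \leq n$ contributes at most $n^q \cdot oe(n)$ to the $q$-th moment, which is negligible after dividing by $n^{q/2}$. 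The problem is thus reduced to showing $\sup_n \mathbb{E}[(\mathsf{Diam}(\mathcal{T}_n)/\sqrt n)^q] < \infty$.

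Second, I would control the moments of $\mathsf{Diam}(\mathcal{T}_n)$. By Proposition \ref{prop:GW}, $\mathcal{T}_n$ is (the planted version of) a critical $\mathsf{GW}_\mu$ tree conditioned on having $n-1$ leaves. The comparison between the conditionings by number of leaves and by total size (see \cite{Kor12} and the sketch following Proposition \ref{prop:GW}) reduces the question to a moment bound for the diameter of a critical $\mathsf{GW}_\mu$ tree conditioned on its size, with $\mu$ having exponential moments. Standard tail estimates for such conditioned Galton--Watson trees then yield uniform exponential tails of the form $\mathbb{P}(\mathsf{Diam}(\mathcal{T}_n) \geq x \sqrt n) \leq C e^{-c x}$, which in particular gives bounded moments of every order and closes the uniform integrability argument.

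The main obstacle is the second step: invoking a sufficiently strong tail estimate for the diameter of a Galton--Watson tree conditioned on the number of leaves. Once this input is granted, Step 1 is a straightforward consequence of the probabilistic estimates already established in Lemma \ref{lem:controle} and Proposition \ref{prop:utile}, and the desired convergence of expectations follows from the standard combination of convergence in distribution with uniform integrability.
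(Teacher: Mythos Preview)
Your approach is essentially the same as the paper's: both upgrade Theorem~\ref{thm:main} via uniform integrability by bounding a higher moment of $\mathsf{Diam}(\mathcal{D}_n^\mu)/\sqrt n$, reduce to $\mathcal{T}_n$ through Proposition~\ref{prop:utile}, and dispose of the exceptional event with the crude bound $\mathsf{Diam}(\mathcal{D}_n^\mu)\leq n$ times $oe(n)$. The only difference is in your acknowledged ``main obstacle'': the paper avoids the detour through size-conditioning and exponential tails by citing Lemma~\ref{lem:height} (a particular case of \cite[Lemma~33]{HM12}), which directly gives the polynomial tail bound $\mathsf{GW}_\mu(\H(\tau)\geq s\sqrt n \mid \lambda(\tau)=n)\leq C_q/s^q$ for every $q>0$, already sufficient for the moment bound you need.
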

 Let $ \H( \tau)$ denote the height of a finite tree $ \tau$. The main tool to prove Proposition \ref{prop:moments} is the following bound on the height of large conditioned Galton--Watson trees, which is a particular case of \cite[Lemma 33]{HM12}.

\begin{lemma} \label{lem:height} For every $q >0$, there exists a constant $C_q<\infty$ such that, for every $n \geq 1$ and $s>0$,
{$$\mathsf{GW}_{\mu}\left( \H( \tau) \geq s n^{1/2} \, | \, \lambda( \tau)=n \right) \leq \frac{C_{q}}{s^q}.$$}
\end{lemma}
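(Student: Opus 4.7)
My strategy is to reduce the stated conditional tail bound to an unconditional joint bound, then apply the spine decomposition to realise the large-height event, and finally invoke a heavy-tailed local limit theorem to constrain the leaf count.

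First, since $\mathsf{GW}_\mu(\lambda(\tau) = n) \sim c_0 n^{-3/2}$ by \cite[Theorem 3.1]{Kor12}, the stated estimate is equivalent to the unconditional bound
\[
\mathsf{GW}_\mu\bigl(\H(\tau) \geq s\sqrt n,\ \lambda(\tau) = n\bigr) \;\leq\; \frac{C_q'}{n^{3/2}\, s^q}
\]
uniformly in $s>0$ and $n \geq 1$, for each fixed $q>0$.

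Second, I would apply the Kesten-type spine decomposition to the event $\{\H(\tau) \geq h\}$ with $h := \lceil s \sqrt n \rceil$. Singling out the leftmost vertex $v^\star$ at generation $h$, the tree decomposes into the spine $\varnothing = v_0, v_1, \ldots, v_h = v^\star$, the off-spine subtrees hanging from each $v_i$ (independent $\mathsf{GW}_\mu$ trees subject to height constraints encoding the leftmost choice of $v^\star$), and the unconditioned $\mathsf{GW}_\mu$ descendant tree of $v^\star$. Under the exponential moment assumption on $\mu$, the spine offspring sizes $C_0, \ldots, C_{h-1}$ inherit exponential moments; hence the total number of attached subtrees $M_h := \sum_{i=0}^{h-1}(C_i-1)+1$ enjoys Cram\'er-type concentration around a constant multiple of $h$. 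The leaf count $\lambda(\tau)$ then splits as an independent sum of $M_h$ copies of $\lambda(T_\mu)$ (possibly truncated in height).

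Third, the constraint $\lambda(\tau)=n$ becomes a constraint on this sum. Each unconditioned leaf count $\lambda(T_\mu)$ has tail $\sim c'/k^{3/2}$, so it lies in the domain of attraction of a positive $1/2$-stable law; the sum of $m = \Theta(h)$ such copies has typical order $m^2 = \Theta(h^2)$. Doney-type uniform local limit estimates for sums in a stable domain then give a density of the form $m^{-2}\, g_{1/2}(n/m^2)$, where $g_{1/2}$ is the positive $1/2$-stable density, which decays super-polynomially as $n/m^2 \to 0$. For $m \sim h \sim s\sqrt n$ one has $n/m^2 \sim 1/s^2$, producing faster-than-polynomial decay in $s$. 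Combining this with the classical tail $\mathsf{GW}_\mu(\H(\tau) \geq h) \leq C/h$ for critical finite-variance offspring and summing over $h \geq s\sqrt n$ then yields the super-polynomial tail with any prescribed exponent $q$.

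\emph{Main obstacle.} The hardest step is the uniform heavy-tailed local limit estimate: one must handle simultaneously the dependence between the spine offspring sizes, the implicit height truncations on the off-spine subtrees forced by the leftmost convention, and the requirement that the sum of heavy-tailed leaf counts equal an atypically small value $n \ll h^2$ when $s$ is large. Verifying that the truncation introduces only a uniformly bounded multiplicative correction, and that the local limit theorem applies uniformly across the relevant $(m,n)$ scales, constitutes the bulk of the technical work.
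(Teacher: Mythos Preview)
The paper does not prove this lemma at all: the sentence introducing it says that it ``is a particular case of \cite[Lemma 33]{HM12}'', and nothing further is given. So there is no in-paper argument to compare your proposal against beyond that citation.

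Regarding your strategy on its own merits: the overall plan is reasonable, and you correctly identify that a naive many-to-one bound (replacing the height event by $\GW_\mu[Z_h\mathbbm 1_{\lambda(\tau)=n}]$) would lose a factor of order $\sqrt n$ (the typical width), which is why the leftmost-vertex decomposition together with $\GW_\mu(\H(\tau)\geq h)\sim 2/(\sigma^2 h)$ is needed. The obstacle you flag is genuine: under the leftmost convention the left-of-spine subtrees carry position-dependent height caps, so the summands in your leaf-count decomposition are neither identically distributed nor all unconditioned. One can argue that these truncated leaf counts are stochastically dominated by the unconditioned ones, which is the right direction for the small-value local event you need, but turning this into a uniform local bound of the correct order in both $n$ and $s$ is real work.

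A shorter route, closer in spirit to what the cited result and \cite{Kor12} actually do, avoids the heavy-tailed local limit altogether: conditioning a $\GW_\mu$ tree on $\lambda(\tau)=n$ can be recast (via the leaf/vertex correspondence of \cite{Kor12}) as conditioning a related critical Galton--Watson tree on its total size, after which the sub-Gaussian height tail for size-conditioned trees applies directly and already yields the stronger bound $C\exp(-cs^2)$, from which any polynomial tail follows immediately.
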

%We now prove Proposition \ref{prop:moments}.

\begin{proof}[Proof of Proposition \ref{prop:moments}]  By Theorem \ref{thm:main}, $F \left( {\mathcal{D}^\mu_{n}}/{\sqrt{n}}\right)$ converges in distribution towards $F(c(\mu)\cdot\mathcal{T}_{ \mathbf{e}})$. It is thus sufficient to check that $ \mathbb E\big[F \left( {\mathcal{D}^\mu_{n}}/{\sqrt{n}}\right)^2\big]$ is bounded as $ n \rightarrow \infty$. In the following lines, $C$ is a finite constant that may vary from line to line and may depend on $p$. {Since} $F( \mathcal{M}) \leq C \ \mathsf{Diam}( \mathcal{M})^p$  and since $ \mathsf{Diam}( \mathcal{M}) \leq  \mathsf{Diam}( \mathcal{N})+ 2 \mathrm{d_{GH}}( \mathcal{M}, \mathcal{N})$ for any two compact metric spaces $ \mathcal{M}$ and $ \mathcal{N}$ (see e.g. \cite[Exercise 7.3.14.]{BBI01}), the expectation $ \mathbb E \big[F \left( {\mathcal{D}^\mu_{n}}/{\sqrt{n}}\right)^2\big]$ is bounded above by the expression
$$  C \ \Es{\mathsf{Diam}\left(  \frac{ \mathcal{T}_{n}}{ \sqrt{n}} \right)^{2p}}+ C \ \Es{ \mathrm{d_{GH}} \left( \frac{ \mathcal{T}^{ \ell}_{n}}{\sqrt n}, \frac { \mathcal{T}_{n}}{\sqrt n}\right)^{2p}}+ C \ \Es{ \mathrm{d_{GH}} \left(\frac{ \mathcal{D}^\mu_{n}}{ \sqrt{n}},c_{\mathrm{\mathrm{geo}}}( \mu) \cdot \frac{ \mathcal{T}^{ \ell}_{n}}{ \sqrt{n}} \right) ^{2p}}.$$
By Lemma \ref{lem:height}, the first term of the last expression is bounded as $ n \rightarrow \infty$, and by Proposition \ref{prop:utile} (i), the second term is bounded as well. For the third term, first note that since the graphs ${ \mathcal{D}^\mu_{n}}$ and $\mathcal{T}^{ \ell}_{n}$ have $n$ vertices, they are at Gromov--Hausdorff distance at most $n$ from each other. Hence, using Proposition \ref{prop:utile} (ii), we get
\begin{eqnarray*}
\Es{ \mathrm{d_{GH}} \left(\frac{ \mathcal{D}^\mu_{n}}{ \sqrt{n}},c_{\mathrm{\mathrm{geo}}}( \mu) \cdot \frac{ \mathcal{T}^{ \ell}_{n}}{ \sqrt{n}} \right) ^{2p}} \leq n^{2p} \cdot  oe(n)+ C \ \Es{\max\left( \frac{ \mathsf{Diam}(\mathcal{T}_{n})} {\sqrt{n}},1\right)^{2p}} \leq C.
\end{eqnarray*}
This completes the proof.
\end{proof}

\subsubsection{Applications to the diameter}
\label{sec:diam}

Since $ \Es{\mathsf{Diam}( \mathcal{T}_{ \mathbf{e}})}= {2 \sqrt{2 \pi}}/{3}$ (see e.g. \cite[Section 3]{Ald91}), we get from Proposition \ref{prop:moments} that
$$\mathbb{E}\Big[\mathsf{Diam}( \mathcal{D}_{n}^\mu)\Big]   \displaystyle \quad\mathop{ \sim}_{n \rightarrow \infty} \quad  c( \mu) \ \frac{2 \sqrt {2\pi}}{3}  \ \sqrt{n}.$$
This gives a precise asymptotic estimate of the expected value of $\mathsf{Diam}(\mathcal D_n^{\mu})$ and improves results of \cite[Section 5]{DMN12} where bounds for the expected value of the diameter of uniform dissections and triangulations were found using a generating functions approach. More generally, for every $p > 0$,
$$\mathbb{E}\Big[\mathsf{Diam}( \mathcal{D}_{n}^\mu)^p\Big]   \displaystyle \quad\mathop{ \sim}_{n \rightarrow \infty} \quad  c( \mu)^p \int_{0}^ \infty x^pf_{D} (x) \mathrm dx  \cdot {n}^{p/2},$$ where
$f_D$, the density of the diameter of the Brownian tree, is given by $$ f_{D} (x)=\frac{2\sqrt{2 \pi}}{3}\sum_{k \geq 1}\left(\frac{4}{x^4} \left(4  b_{k,x}^4-36  b_{k,x}^3+ 75  b_{k,x}^2 -30  b_{k,x} \right) +\frac{2}{x^2}\left( 4  b_{k,x}^3-10  b_{k,x}^2\right) \right)\exp(- b_{k,x}), $$ with  $b_{k,x}=\left( 4\pi k/x \right)^2$ for  $x>0$ (see e.g. \cite{Sze83} and \cite[Section 3]{Ald91}).

\subsubsection{Applications to the radius}

Let $ \mathsf{Radius}(\mathcal{D}^\mu_{n})$ denote the maximal distance of a vertex of $ \mathcal{D}^\mu_{n}$ to the vertex $\overline 0$. A simple extension of Theorem \ref{thm:main} and Proposition \ref{prop:moments} to the pointed Gromov--Hausdorff topology (see e.g. \cite{LG05}), entails that, for every $ p > 0$,  $ \Es{ \mathsf{Radius}(\mathcal{D}^\mu_{n})^p}$ is asymptotic to $c( \mu)^p  \Es{\mathsf{Height}(\mathcal{T}_{ \mathbf{e}})^p} n^{p/2}$ as $ n \rightarrow \infty$. Using the explicit expression for $ \Es{\mathsf{Height}(\mathcal{T}_{ \mathbf{e}})^p}$ in \cite{BPY01}, we get
$$  \Es{ \mathsf{Radius}(\mathcal{D}^\mu_{n})^p}  \quad\mathop{ \sim}_{n \rightarrow \infty} \quad  c( \mu)^p \ 2^{-p/2}p(p-1) \Gamma (p/2)\zeta(p) \ n^{p/2},$$
where $\Gamma$ denotes Euler's gamma function and $\zeta$ Riemann's zeta function. In particular, for $p=1$, we get
$$  \Es{ \mathsf{Radius}(\mathcal{D}^\mu_{n})}  \quad\mathop{ \sim}_{n \rightarrow \infty} \quad  c( \mu)  \  \sqrt{\frac{{\pi}}{2}} \  \sqrt{n}.$$
In \cite{DMN12}, this result has been established for uniform dissections and uniform triangulations by using a generating functions approach.

\subsubsection{Applications to the height of a uniform leaf}

 Let $ \mathsf{Height}_{U}(\mathcal{D}^\mu_{n})$ denote the distance to the vertex $ \overline{0}$ of a vertex of $ \mathcal{D}^\mu_{n}$ chosen uniformly at random.  A simple extension of Theorem \ref{thm:main} and Proposition \ref{prop:moments} to the two-pointed Gromov--Hausdorff topology, entails that, for every $ p > 0$, $ \Es{ \mathsf{Height}_{U}(\mathcal{D}^\mu_{n})^p}$ is asymptotic to $c( \mu)^p  \Es{\mathsf{Height}_{U}(\mathcal{T}_{ \mathbf{e}})^p} n^{p/2}$ as $ n \rightarrow \infty$, where $\mathsf{Height}_{U}(\mathcal{T}_{ \mathbf{e}})$ is the height of a uniformly chosen point  of $ \mathcal{T}_{ \mathbf{e}}$. Since the random variable ${\mathsf{Height}_U(\mathcal{T}_{ \mathbf{e}})}$ has density $4x\exp(-2x^2)$ (see e.g. \cite[Section 3]{Ald91}), we get
$$  \Es{  \mathsf{Height}_{U}(\mathcal{D}^\mu_{n})^p}  \quad\mathop{ \sim}_{n \rightarrow \infty} \quad  c( \mu)^p \ 2^{-p/2}  \Gamma (1+p/2) \ n^{p/2}.$$
In particular, for $p=1$, we get
$ \   \displaystyle  \Es{  \mathsf{Height}_{U}(\mathcal{D}^\mu_{n})}  \mathop{ \sim}_{n \rightarrow \infty}   c( \mu)  \ \frac{1}{2} \sqrt{\frac{{\pi}}{2}} \  \sqrt{n}.$

\subsection{Examples: Dissection with constrained face degrees}
\label{examples}

Let $\mathcal{A}$ be a non-empty subset of $\{3,4,5,\ldots\}$ and let $\mathbf{D}^{(\mathcal{A})}_n$ be the set of all dissections of $ \mathcal{P}_{n}$ whose face degrees all belong to the set $\mathcal{A}$. We restrict our attention to the values of $n$ for which $ \mathbf{D}_{n}^{( \mathcal{A})} \ne  \varnothing$.  Let
$\mathcal{D}^{(\mathcal{A})}_n$ be  uniformly distributed  over
$\mathbf{D}^{(\mathcal{A})}_n$. By \cite[Section 3.1.1]{CKdissections},  $\mathcal{D}^{(\mathcal{A})}_n$  is distributed according to the Boltzmann probability measure $\mathbb{P} ^ {\nu_{\mathcal{A}}}_ {n}$   for a certain probability measure $\nu_{\mathcal{A}}$ defined as follows. Denote by $\mathcal{A}-1$ the set $ \{ a-1 : \, a \in \mathcal{A}\}$ and  let $r_{\mathcal{A}} \in
(0,1)$ be the unique real number in $(0,1)$ such that
$$\sum_{i \in \mathcal{A}-1} i r_{\mathcal{A}}^{i-1}=1.$$
Then $\nu_{\mathcal{A}}$ is defined by
$$\nu_{\mathcal{A}}(0)=1-\sum_{i \in \mathcal{A}-1} r_{\mathcal{A}}^{i-1}, \qquad\nu_{\mathcal{A}}(i)=r_{\mathcal{A}}^{i-1}  \textrm{  for }i \in
\mathcal{A}-1.$$ Note that the assumptions of Theorem \ref{thm:main} are satisfied. Hence, setting $c_{ \mathcal{A}}= c(\nu_{\mathcal{A}})$ to simplify notation, we get:
$$ \frac{1}{ \sqrt{n}} \cdot \mathcal{D}^ { ( \mathcal{A})}_{n}  \quad \xrightarrow[n\to\infty]{(d)} \quad      c_{ \mathcal{A}} \cdot  \mathcal{T}_{ \mathbf{e}},$$
together with the convergences of all positive moments of the different statistics mentioned in the previous section.

For uniform dissections ($A= \{3,4,5,\ldots\}$) and $p$-angulations for $p\geq 3$ ($A = \{p\}$), the scaling constants $c_{ \mathcal{A}}$ have been given in the Introduction. Let us mention two other interesting cases where $c_{ \mathcal{A}}$ is explicit (we leave the calculations to the reader): 
\begin{itemize}
\item  Only even face degrees ($A =   \{4,6,8, \ldots \}$). In this case $ c_A= \displaystyle \sqrt { \frac{1}{2}+ \frac{9}{2 \sqrt {17}}}  \simeq 1.2615.$
\item Only odd face degrees ($A =   \{3,5,7, \ldots \}$). In this case, the explicit expression of $ c_{ \mathcal{A}}$ is complicated (but available) and we only give a numerical approximation: $ c_{ \mathcal{A}} \simeq 1.0547$.

%For people who like to look at source codes: \begin{eqnarray*}&& c_A ^2 = -\frac{128}{5}-\frac{128}{5 \sqrt{\frac{273}{508-\frac{1520\ 2^{2/3}}{\gamma}+5 \ 2^{1/3} \gamma}}} \\ && +\frac{1}{2} \sqrt{\frac{99614720\ 2^{2/3}+66584576 \gamma - 327680 \ 2^{1/3}  \gamma^2 }{6825 \gamma}+\frac{32243712}{25} \sqrt{\frac{3}{91 \left(508-\frac{1520\ 2^{2/3}}{\gamma}+5 \ 2^{1/3} \gamma \right)}}}\\ &&   \quad  \mbox{where} \quad \gamma = \left(813 \sqrt{273}-11147\right)^{1/3}. \end{eqnarray*} 
\end{itemize}

 \subsection{Extensions and discrete looptrees}

Let us mention some possible extensions of Theorem \ref{thm:main}. In one direction, it is natural to expect that Theorem \ref{thm:main} is still valid under the weaker assumption that $ \mu$ is critical and has finite variance. However our proof based on large deviation estimates seems unadapted and finer arguments would be needed.
In another direction, it would be interesting to extend Theorem \ref{thm:main} to other classes of so-called sub-critical graphs which also exhibit a tree-like structure, see \cite{DMN12,FN99}. \medskip

We now study the scaling limits of discrete looptrees associated with large conditioned Galton--Watson trees, which is a model similar  to the one of Boltzmann dissections: With every rooted oriented tree (or plane tree) $ \tau$, we associate a graph denoted by $ \mathsf{Loop}( \tau)$ and constructed by replacing each vertex $u \in \tau$ by a discrete cycle of length given by the degree of $u$ in $ \tau$ (i.e. number of neighbors of $u$) and gluing all these cycles according to the tree structure provided by $\tau$, see Figure \ref{fig:loop}. We view $\mathsf{Loop}( \tau)$ as a compact metric space by endowing its vertices with the graph distance.  
 \begin{figure}[!h]
 \begin{center}
 \includegraphics[width=0.6  \linewidth]{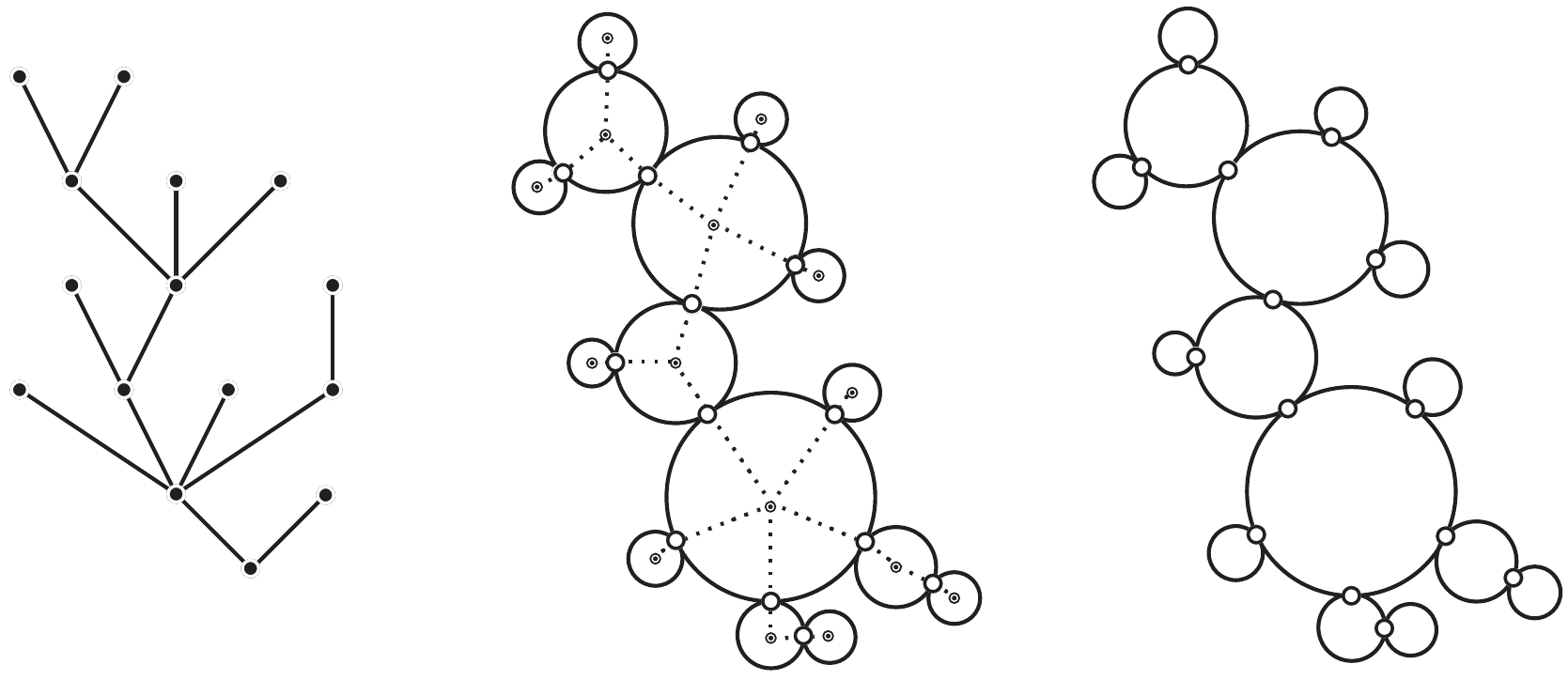}
 \caption{ \label{fig:loop}A discrete tree $\tau$ and its associated discrete looptree $ \mathsf{Loop}( \tau)$.}
 \end{center}
 \end{figure}
  Recall the notation $\mu_{0}+ \mu_{2} + \mu_{4} + \cdots = \mu_{2 \mathbb{Z}_{+}}$.
 
 \begin{theorem} \label{thm:loops} Let $\mu$ be a probability measure on $  \Z_{+}$ of mean $1$ and such that $ \sum_{k \geq 0}\mu_{k} e ^{\lambda k} < \infty$ for some $ \lambda >0$. For $ n \geq 1$, let $ \t_{n}$ be a $ \GW_{ \mu}$ tree conditioned on having $n$ vertices. Then we have the following convergence in distribution for the Gromov--Hausdorff topology
  \begin{eqnarray*} n^{-1/2} \cdot \mathsf{Loop}( \t_{n}) & \xrightarrow[n\to\infty]{(d)} &    { \frac{2}{ \sigma}} \cdot   \frac{1}{4}\left( \sigma^2+ 4- \mu_{2 \Z+} \right) \cdot  \mathcal{T}_{ \mathbf{e}}.  \end{eqnarray*}
  \end {theorem}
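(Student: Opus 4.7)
My plan is to follow the template of the proof of Theorem \ref{thm:main}, with one substantial simplification: since the entry and exit points on each cycle of $\mathsf{Loop}(\tau)$ are uniquely determined by the tree structure, no Markov chain with nontrivial driving process is required. By Aldous' theorem, $n^{-1/2} \cdot \mathfrak{t}_n$ converges in distribution for the Gromov--Hausdorff topology to $(2/\sigma) \cdot \mathcal{T}_{\mathbf{e}}$, so it is enough to show that $\mathsf{Loop}(\mathfrak{t}_n)$ and $c_{\mathrm{loop}}(\mu) \cdot \mathfrak{t}_n$ are at Gromov--Hausdorff distance $o(\sqrt{n})$ in probability, where $c_{\mathrm{loop}}(\mu) := \tfrac{1}{4}(\sigma^2 + 4 - \mu_{2\mathbb{Z}_+})$, and then to apply Slutsky.

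First I would describe geodesics in $\mathsf{Loop}(\tau)$ in the spirit of Section \ref{sec:geoexplo}. For a vertex $u \in \tau$ at height $h$ with ancestral line $\varnothing = w_0, w_1, \ldots, w_h = u$, a shortest path in $\mathsf{Loop}(\tau)$ from the root-cycle to the parent-gluing vertex of $u$ traverses each intermediate cycle (the one attached to $w_i$) along the shorter of the two arcs between the incoming and outgoing gluing points. Contrary to the dissection case, the entry and exit vertices are rigidly fixed by the tree structure, so no L/R/U memory is carried across steps. The total length is thus a sum of purely local contributions of the form $\min(V_{i+1}, C_i + 1 - V_{i+1})$, where $C_i$ is the number of children of $w_i$ and $V_{i+1}$ is the rank of $w_{i+1}$ among these children.

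Next I would run this construction along the spine of the infinite tree $T_\infty^\bullet$. Since the variables $(C_k, V_{k+1})_{k \geq 1}$ are i.i.d.\ in the Kesten decomposition recalled in Section \ref{section:GWinfinite}, the corresponding step lengths form an i.i.d.\ sequence with finite exponential moments. Splitting according to the parity of $C_k$, a direct computation yields
\[
\mathbb{E}\bigl[\min(V_{k+1}, C_k + 1 - V_{k+1})\bigr] \;=\; \tfrac{1}{4}\bigl(\sigma^2 + 4 - \mu_{2\mathbb{Z}_+}\bigr) \;=\; c_{\mathrm{loop}}(\mu),
\]
and Cramér's theorem gives the exponential large-deviation bound $\mathbb{P}(|S_n/n - c_{\mathrm{loop}}(\mu)| \geq \epsilon) \leq e^{-B(\epsilon) n}$, playing the role of Proposition \ref{prop:dev}. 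Transferring this to the conditioned measure via the size-biased identity of Proposition \ref{prop:biais} and the standard polynomial asymptotic $\mathsf{GW}_\mu(|\tau| = n) \sim c \cdot n^{-3/2}$ (which holds under finite variance), then combining with invariance under uniform re-rooting and a union bound over $n^3$ i.i.d.\ pairs of uniform vertices exactly as in Lemma \ref{lem:controle}, I would obtain the looptree analogue: with probability $1 - oe(n)$, \emph{all} pairwise distances in $\mathsf{Loop}(\mathfrak{t}_n)$ are within $\epsilon \max(\mathsf{Diam}(\mathfrak{t}_n), \sqrt{n})$ of $c_{\mathrm{loop}}(\mu)$ times the corresponding tree distance.

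Finally, I would build the correspondence between $\mathsf{Loop}(\mathfrak{t}_n)$ and $c_{\mathrm{loop}}(\mu) \cdot \mathfrak{t}_n$ that pairs each looptree vertex $x$ with the unique tree vertex $u$ whose cycle contains $x$. The distortion of this correspondence is bounded by the above estimate plus twice the maximum cycle diameter in $\mathsf{Loop}(\mathfrak{t}_n)$, which is itself at most the maximum degree of $\mathfrak{t}_n$. Under the exponential moment assumption, classical arguments give that this maximum degree is $O(\log n) = o(\sqrt{n})$ with probability $1 - oe(n)$, so after rescaling by $n^{-1/2}$ the Gromov--Hausdorff distance between $\mathsf{Loop}(\mathfrak{t}_n)/\sqrt{n}$ and $c_{\mathrm{loop}}(\mu) \cdot \mathfrak{t}_n/\sqrt{n}$ tends to $0$ in probability, and combining with Aldous' theorem closes the proof. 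The main obstacle is the same as in Section \ref{sec:proof}: one must carefully combine the exponential large-deviation bound from the unconditioned $T_\infty^\bullet$ with the polynomial cost $n^{-3/2}$ of the conditioning event $\{|\tau|=n\}$, the transfer being carried out via the size-biased identity of Proposition \ref{prop:biais}.
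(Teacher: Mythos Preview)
Your proposal is correct and follows essentially the same approach as the paper, which explicitly states that the proof ``goes along the same lines as that of Theorem~\ref{thm:main}, but is much easier since here the Markov chain is just a random walk'' and leaves the details to the reader. Your identification of the i.i.d.\ step $\min(V_{i+1}, C_i+1-V_{i+1})$ and the computation of its mean as $\tfrac14(\sigma^2+4-\mu_{2\mathbb Z_+})$ are exactly what the paper has in mind.
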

 
 The proof of Theorem \ref{thm:loops} goes along the same lines as that of Theorem \ref{thm:main}, but is much easier since here the Markov chain is just a random walk. We leave details to the reader. In \cite {CK13}, it is shown that when $\mu$ is a critical probability measure on $  \Z_{+}$ belonging to the domain of attraction of a stable law of index $ \alpha \in (1,2)$,  the random metric spaces $\mathsf{Loop}( \t_{n})$, appropriatly rescaled, converge towards the so-called random stable looptree of index $ \alpha$. Hence Theorem \ref{thm:loops} completes   \cite{CK13} by including the case where $ \mu$ has finite variance.
 \bigskip

 We end this paper by considering a model which is similar to the one of discrete looptres: With every rooted oriented tree (or plane tree) $ \tau$, we associate a graph denoted by $ \overline{\mathsf{Loop}}( \tau)$ constructed as follows. First consider the graph on the set of vertices of $\tau$ such that two vertices $u$ and $v$ are joined by an edge if and only if one of the following three conditions are satisfied in $ \tau$: $u$ and $v$ are consecutive siblings of a same parent, or $u$ is the first sibling (in the lexicographical order) of $v$, or $u$ is the last sibling of $v$. Then $ \overline{\mathsf{Loop}}( \tau)$ is by the definition the graph obtained  by contracting the edges $(u,v)$ such that $v$ is the last child of $ u$ in lexicographical order in $\tau$, see Figure \ref{fig:loop}. We view $ \overline{\mathsf{Loop}}( \tau)$ as a compact metric space by endowing its vertices with the graph distance.  
 \begin{figure}[!h]
 \begin{center}
 \includegraphics[width=0.8  \linewidth]{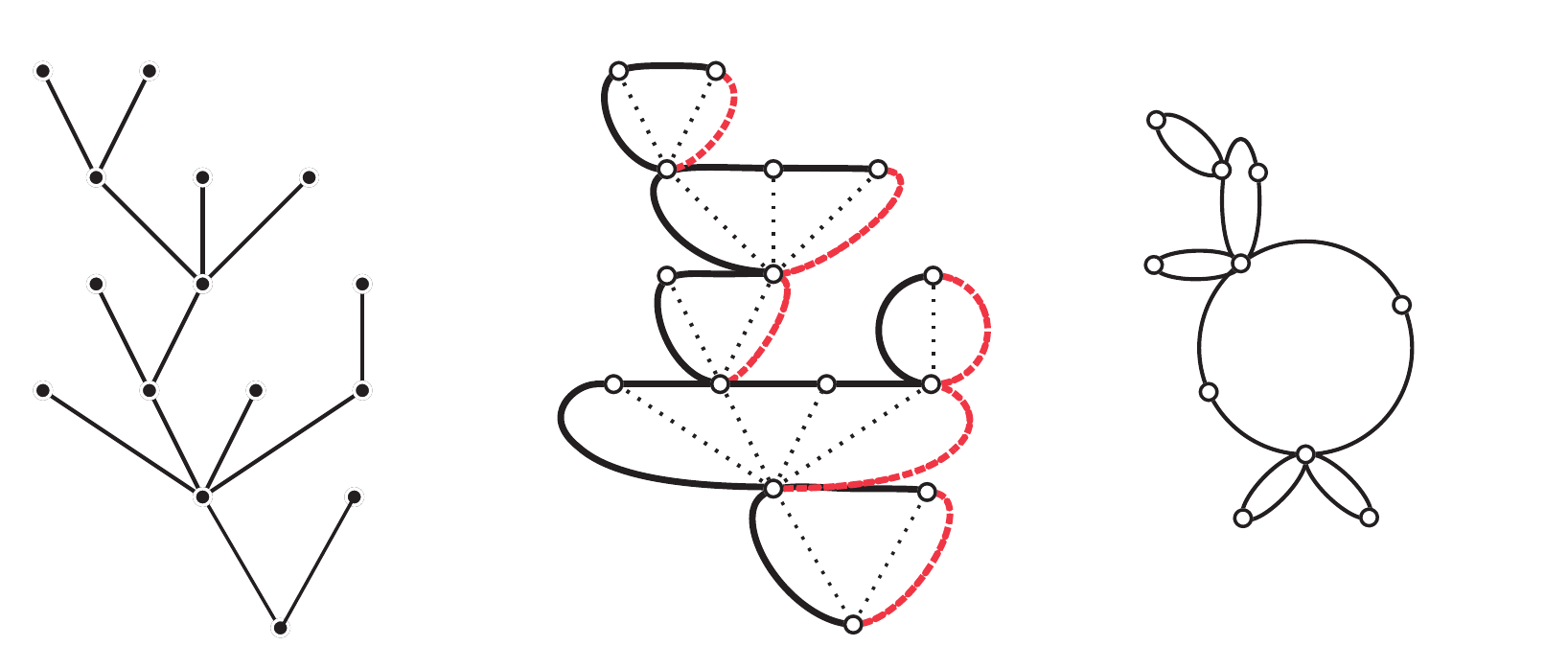}
 \caption{ \label{fig:loop}A discrete tree $\tau$ and its associated graph $  \overline{\mathsf{Loop}}( \tau)$. The contracted edges are bold, dashed and in red.}
 \end{center}
 \end{figure}
 
 \begin{theorem} \label{thm:loops2} Let $\mu$ be a probability measure on $  \Z_{+}$ of mean $1$ and such that $ \sum_{k \geq 0}\mu_{k} e ^{\lambda k} < \infty$ for some $ \lambda >0$. For $ n \geq 1$, let $ \t_{n}$ be a $ \GW_{ \mu}$ tree conditioned on having $n$ vertices. Then we have the following convergence in distribution for the Gromov--Hausdorff topology
  \begin{eqnarray*} n^{-1/2} \cdot \overline{\mathsf{Loop}}( \t_{n}) & \xrightarrow[n\to\infty]{(d)} &    { \frac{2}{ \sigma}} \cdot   \frac{1}{4}\left( \sigma^2+ \mu_{2 \Z+} \right) \cdot  \mathcal{T}_{ \mathbf{e}}.  \end{eqnarray*}
  \end {theorem}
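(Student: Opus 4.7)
The plan is to mimic the two-step strategy used for Theorem \ref{thm:main} and Theorem \ref{thm:loops}: first compare distances in $\overline{\mathsf{Loop}}(\tau)$ with distances in $\tau$ through a local exploration algorithm along ancestral paths, then combine this with Aldous's classical convergence $n^{-1/2}\cdot \t_n \xrightarrow{(d)} (2/\sigma)\cdot \mathcal{T}_{\mathbf{e}}$ for the Gromov--Hausdorff topology, which holds under our hypotheses. Setting $c_{\mathrm{geo}}(\mu) := \frac{1}{4}(\sigma^2 + \mu_{2\mathbb{Z}_+})$, it suffices to show that for every $\epsilon > 0$,
$$\mathsf{GW}_{\mu}\Big(\exists\, u \in \tau : \big|d_{\overline{\mathsf{Loop}}(\tau)}(\bar\varnothing, \bar u) - c_{\mathrm{geo}}(\mu)\, |u|\big| \geq \epsilon\max(\mathsf{Diam}(\tau), \sqrt{n})\ \Big|\ |\tau| = n\Big) = oe(n),$$
where $\bar u$ denotes the image in $\overline{\mathsf{Loop}}(\tau)$ of $u \in \tau$, and then upgrade this root-to-vertex bound to a uniform estimate over pairs of vertices.

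The first task is purely combinatorial: describe a geodesic from $\bar \varnothing$ to $\bar v$ in $\overline{\mathsf{Loop}}(\tau)$ through a local iterative algorithm indexed by the ancestral path $\llbracket \varnothing, v \rrbracket$ in $\tau$, in the spirit of the algorithm $\mathsf{Geod}$ of Section \ref{sec:geoexplo}. At each vertex $u$ of this path with $G$ children strictly on the left and $D$ children strictly on the right of the path, the cycle around $u$ in $\overline{\mathsf{Loop}}(\tau)$ (which has length equal to the number of children of $u$, due to the contraction of the last-child edge) offers two possible ways to proceed. The number of edges added at this step depends only on $(G, D)$ and on the current side of arrival, through a case analysis essentially parallel to the one after Proposition \ref{prop:geod}, except that the cycle lengths are smaller by one because of the contractions, and some care is needed at vertices lying on rightmost ancestral branches (which collapse to a single point in $\overline{\mathsf{Loop}}(\tau)$).

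Next, I would run this algorithm along the spine $(W_n)$ of the critical Galton--Watson tree conditioned to survive $T_\infty$. Since the offspring numbers of the spine vertices together with the positions of the spine among their siblings form an i.i.d.\ sequence with law coming from the size-biased distribution $\mu^\star$ and from a uniform split, the length process $(S_n)$ of the geodesic is a \emph{standard} random walk with i.i.d.\ increments having exponential moments. This is significantly simpler than the Markov additive process of Section \ref{sec:mc}, since no nontrivial driving chain has to be handled. A direct calculation of the mean step, based on the explicit distribution of $(G_i, D_i)$, should yield $\mathbb{E}[X_1] = \frac{1}{4}(\sigma^2 + \mu_{2\mathbb{Z}_+}) = c_{\mathrm{geo}}(\mu)$, and Cram\'er's theorem directly gives the large deviation bound $\mathbb{P}(|S_n - n\, c_{\mathrm{geo}}(\mu)| \geq \epsilon n) = oe(n)$.

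The transfer from $T_\infty$ back to $\t_n$ then follows the pattern of Lemma \ref{lem:controle}: the size-biased spine identity of Proposition \ref{prop:biais}, combined with the classical local limit estimate $\mathsf{GW}_\mu(|\tau|=n) \asymp n^{-3/2}$ (here one conditions on the total number of vertices, so Otter's local limit theorem applies directly), yields the displayed root-to-vertex estimate. To promote this to a uniform bound over pairs of vertices, rotational invariance is unavailable for plane trees, but a re-rooting argument at a uniformly chosen vertex, together with a union bound over $n^3$ uniformly chosen pairs as in the end of the proof of Lemma \ref{lem:controle}, suffices. Combining with the trivial correspondence $\{(u, \bar u) : u \in \t_n\}$ between $\t_n$ and $\overline{\mathsf{Loop}}(\t_n)$ gives $\mathrm{d_{GH}}(\overline{\mathsf{Loop}}(\t_n)/\sqrt{n},\ c_{\mathrm{geo}}(\mu) \cdot \t_n/\sqrt{n}) \to 0$ in probability, and Aldous's theorem concludes. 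The main obstacle I anticipate is the combinatorial step: because the contractions of last-child edges propagate along each rightmost chain of $\tau$, neighbouring cycles can share more than one vertex in $\overline{\mathsf{Loop}}(\tau)$, and the local algorithm must be designed so as to correctly track these identifications at branching vertices where the geodesic switches sides.
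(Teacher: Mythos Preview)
Your proposal is correct and follows essentially the same route as the paper: the paper's own proof is simply the sentence ``the proof of Theorem \ref{thm:loops2} goes along the same lines as that of Theorem \ref{thm:main}, and is again easier since here the Markov chain is also just a random walk,'' and you have fleshed out precisely that outline, including the key simplification (i.i.d.\ increments, hence Cram\'er instead of the Markov-additive large deviations of Proposition \ref{prop:dev}) and the correct identification of the constant $c_{\mathrm{geo}}(\mu)=\tfrac{1}{4}(\sigma^2+\mu_{2\mathbb Z_+})$.
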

 
 As for Theorem \ref {thm:loops}, the proof of Theorem \ref{thm:loops2} goes along the same lines as that of Theorem \ref{thm:main}, and is again easier since here the Markov chain is also just a random walk. We leave details to the reader. The motivation of this model comes from the fact that Theorem \ref{thm:loops2} has an interesting application to the study of the asymptotic behavior of subcritical site-percolation on large random triangulations \cite{CKpercolooptrees}. 

\bibliographystyle{siam}

\begin{thebibliography}{10}

\bibitem{AlMar08}
{\sc M.~Albenque and J.-F. Marckert}, {\em Some families of increasing planar
  maps}, Electron. J. Probab., 13 (2008), pp.~no. 56, 1624--1671.

\bibitem{Ald91a}
{\sc D.~Aldous}, {\em The continuum random tree. {I}}, Ann. Probab., 19 (1991),
  pp.~1--28.

\bibitem{Ald91}
\leavevmode\vrule height 2pt depth -1.6pt width 23pt, {\em The continuum random
  tree. {II}. {A}n overview}, in Stochastic analysis ({D}urham, 1990), vol.~167
  of London Math. Soc. Lecture Note Ser., Cambridge Univ. Press, Cambridge,
  1991, pp.~23--70.

\bibitem{Ald94b}
{\sc D.~Aldous}, {\em Triangulating the circle, at random.}, Amer. Math.
  Monthly, 101 (1994).

\bibitem{BPS08}
{\sc N.~Bernasconi, K.~Panagiotou, and A.~Steger}, {\em On properties of random
  dissections and triangulations}, in Proceedings of the {N}ineteenth {A}nnual
  {ACM}-{SIAM} {S}ymposium on {D}iscrete {A}lgorithms, New York, 2008, ACM,
  pp.~132--141.

\bibitem{Bet14}
{\sc J.~Bettinelli}, {\em Scaling limit of random planar quadrangulations with
  a boundary}, Ann. Inst. Henri Poincaré Probab. Stat. (to appear).

\bibitem{BPY01}
{\sc P.~Biane, J.~Pitman, and M.~Yor}, {\em Probability laws related to the
  {J}acobi theta and {R}iemann zeta functions, and {B}rownian excursions},
  Bull. Amer. Math. Soc. (N.S.), 38 (2001), pp.~435--465 (electronic).

\bibitem{BBI01}
{\sc D.~Burago, Y.~Burago, and S.~Ivanov}, {\em A course in metric geometry},
  vol.~33 of Graduate Studies in Mathematics, American Mathematical Society,
  Providence, RI, 2001.

\bibitem{Cinlar72}
{\sc E.~{\c{C}}inlar}, {\em Markov additive processes. {I}, {II}}, Z.
  Wahrscheinlichkeitstheorie und Verw. Gebiete, 24 (1972), pp.~85--93; ibid. 24
  (1972), 95--121.

\bibitem{CKdissections}
{\sc N.~Curien and I.~Kortchemski}, {\em Random non-crossing plane
  configurations: a conditioned {G}alton-{W}atson tree approach}, Random
  Structures Algorithms (to appear).

\bibitem{CKpercolooptrees}
\leavevmode\vrule height 2pt depth -1.6pt width 23pt, {\em Percolation on
  random triangulations and stable looptrees}, arXiv:1307.6818,  (submitted).

\bibitem{CK13}
\leavevmode\vrule height 2pt depth -1.6pt width 23pt, {\em Random stable
  looptrees}, arXiv:1304.1044,  (submitted).

\bibitem{CLGrecursive}
{\sc N.~Curien and J.-F. Le~Gall}, {\em Random recursive triangulations of the
  disk \textit{via} fragmentation theory}, Ann. Probab., 39 (2011),
  pp.~2224--2270.

\bibitem{DFHN99}
{\sc L.~Devroye, P.~Flajolet, F.~Hurtado, and W.~Noy, M.and~Steiger}, {\em
  Properties of random triangulations and trees.}, Discrete Comput. Geom., 22
  (1999).

\bibitem{DMN12}
{\sc M.~Drmota, A.~de~Mier, and M.~Noy}, {\em Extremal statistics on
  non-crossing configurations}, preprint.

\bibitem{Eva08}
{\sc S.~N. Evans}, {\em Probability and real trees}, vol.~1920 of Lecture Notes
  in Mathematics, Springer, Berlin, 2008.
\newblock Lectures from the 35th Summer School on Probability Theory held in
  Saint-Flour, July 6--23, 2005.

\bibitem{FN99}
{\sc P.~Flajolet and M.~Noy}, {\em Analytic combinatorics of non-crossing
  configurations}, Discrete Math., 204 (1999), pp.~203--229.

\bibitem{GW00}
{\sc Z.~Gao and N.~C. Wormald}, {\em The distribution of the maximum vertex
  degree in random planar maps}, J. Combin. Theory Ser. A, 89 (2000),
  pp.~201--230.

\bibitem{HM12}
{\sc B.~Haas and G.~Miermont}, {\em Scaling limits of {M}arkov branching trees,
  with applications to {G}alton-{W}atson and random unordered trees}, Ann.
  Probab., 40 (2012), pp.~2589--2666.

\bibitem{INN85}
{\sc I.~Iscoe, P.~Ney, and E.~Nummelin}, {\em Large deviations of uniformly
  recurrent {M}arkov additive processes}, Adv. in Appl. Math., 6 (1985),
  pp.~373--412.

\bibitem{JanSte14}
{\sc S.~Janson and S.~O. Stefansson}, {\em Scaling limits of random planar maps
  with a unique large face}, Ann. Probab. (to appear).

\bibitem{Kes86}
{\sc H.~Kesten}, {\em Subdiffusive behavior of random walk on a random
  cluster}, Ann. Inst. H. Poincar\'e Probab. Statist., 22 (1986), pp.~425--487.

\bibitem{Kor11}
{\sc I.~Kortchemski}, {\em Random stable laminations of the disk}, Ann. Probab.
  \emph{(to appear)}.

\bibitem{Kor12}
\leavevmode\vrule height 2pt depth -1.6pt width 23pt, {\em Invariance
  principles for {G}alton-{W}atson trees conditioned on the number of leaves},
  Stochastic Process. Appl., 122 (2012), pp.~3126--3172.

\bibitem{LG05}
{\sc J.-F. Le~Gall}, {\em Random trees and applications}, Probability Surveys,
  (2005).

\bibitem{LP10}
{\sc R.~Lyons and Y.~Peres}, {\em Probability on Trees and Networks}, Current
  version available at http://mypage.iu.edu/~rdlyons/, In preparation.

\bibitem{Riz11}
{\sc D.~Rizzolo}, {\em Scaling limits of {M}arkov branching trees and
  {G}alton-{W}atson trees conditioned on the number of vertices with out-degree
  in a given set}, Ann. Inst. Henri Poincaré Probab. Stat. (to appear).

\bibitem{Sze83}
{\sc G.~Szekeres}, {\em Distribution of labelled trees by diameter}, in
  Combinatorial mathematics, {X} ({A}delaide, 1982), vol.~1036 of Lecture Notes
  in Math., Springer, Berlin, 1983, pp.~392--397.

\end{thebibliography}

\end{document}